\newtheorem{lemma}{Lemma}
\newtheorem{theorem}{Theorem}
\newtheorem*{corollary*}{Corollary}
\newtheorem{proposition}{Proposition}
\author{Athanasios Sourmelidis\\
%{\small Technical University of Graz, 
%Faculty of Mathematics, Physics and Geodesy, Graz, Austria}\\
 {\small sourmelidis@math.tugraz.at}}
\date{}
\title{Discrete Moments of the Riemann Zeta-Function near $\sigma=1$}
\begin{document}
\maketitle
\begin{abstract}
\noindent
We employ mean value estimates of Weyl sums in order to obtain discrete second moments of the Riemann zeta-function with respect to polynomials near the vertical line $1+i\mathbb{R}$.
\end{abstract}
\keywords{\textbf{Keywords:}  Vinogradov's Mean Value Theorem,  Discrete Moments\\
\textbf{MSC 2020}: 11M06, 11L15\\
%\textbf{Funding}: The author is supported by the Austrian Science Fund (FWF), project Y-901.}
\section{Introduction and Main Results}

Let $\zeta(s)$ denote the Riemann Zeta-function, where $s:=\sigma+it$ is a complex variable.
Our aim is to study discrete second moments of $\zeta(s)$ inside the vertical strip $\mathcal{D}:=\left\{s\in\mathbb{C}:1/2<\sigma\leq1\right\}$ with respect to polynomials and monomials of degree larger than 1. 
That is, we want to evaluate the following limits
\begin{align*}
\lim\limits_{N\to\infty}\dfrac{1}{N}\sum\limits_{n=1}^{N}\left|\zeta\left(s+i\left(a_1n+\dots+a_dn^d\right)\right)\right|^2\,\,\,\text{ and }\,\,\,\lim\limits_{N\to\infty}\dfrac{1}{N}\sum\limits_{n=1}^{N}\left|\zeta\left(s+ian^d\right)\right|^2,
\end{align*}
where $1/2<\sigma\leq 1$, $d\geq2$ is an integer and $a_1,\dots,a_d,a$ are positive real numbers.

The case of $d=1$ has been first resolved by Reich \cite{ReichE}, who proved that the limit exists for any $a>0$ and it coincides almost always with $\zeta(2\sigma)$. 
For the rest of the values $a$, the limit is equal to a factor, which depends on $a$, multiplied with $\zeta(2\sigma)$.  
One can already see that discrete moments are of different nature than the continuous moments, where we know that
\begin{align*}
\lim\limits_{T\to\infty}\dfrac{1}{T}\int\limits_{0}^{T}\left|\zeta\left(s+ia\tau\right)\right|^2\mathrm{d}\tau=\zeta(2\sigma)
\end{align*}
for any $1/2<\sigma\leq1$ and any $a>0$ (see for example \cite[Theorem 1.11]{ivicriemann}).
It is worth mentioning here the work of Good \cite{good1978diskrete}, who proved an asymptotic formula for the discrete fourth monents of  $\zeta(s)$ in $\mathcal{D}$ and his approach can easily be adapted for the discrete second moments of $\zeta(s)$, when someone wishes to have an error term as $N$ tends to infinity.

However, only the case of linear polynomials has been considered so far in the literature. 
The main reason is that, most of the times, we end up estimating finite exponential sums of the form
$
\sum_{n}\exp\left(2\pi i\left(a_1n+\dots+a_dn^d\right)\right)
$,
also known as Weyl sums, or
$
\sum_{n}\exp\left(2\pi i\left(an^d\right)\right)
$. 
 It is clear that in the case of $d=1$, that is, for linear polynomials, the latter sums coincide, represent a geometric series and can be estimated rather efficiently. 
As a  matter of fact, the treatment of the case $d=1$ has produced results also on the vertical line $1/2+i\mathbb{R}$, which is the most interesting case with respect to the zero-distribution of $\zeta(s)$.
 For example, Putnam \cite{zbMATH03087040} proved that the sequence of the positive consequtive zeros of the function $\zeta\left(1/2+it\right)$ does not contain any periodic subsequence, while van Frankenhuijsen \cite{zbMATH02244608} gave an explicit upper bound for the smallest positive integer $n$ such that $\zeta\left(1/2+ian\right)\neq0$ whenever $a>0$ is a fixed number.
 More recently, Li and Radziwi{\l\l} \cite{zbMATH06404852} proved by using the method of mollifiers that for any real numbers $a$ and $b$ with $a>0$,
 \begin{align*}
 \liminf\limits_{T\to\infty}\dfrac{1}{N}\sharp\left\{N<n\leq2N:\zeta\left(\dfrac{1}{2}+i(an+b)\right)\neq0\right\}\geq\dfrac{1}{3}.
 \end{align*}
 
 On the other hand, for $d\geq2$ the known estimates for Weyl sums rely on the rational approximations of the coefficients of the given polynomial $a_1x+\dots+a_dx^d$  and the corresponding length of the sum.  
 At this point lies the major difference with the case of the continuous moments of $\zeta(s)$, where exponential integrals
$\int\exp\left(2\pi i\left(a_1x+\dots+a_dx^d\right)\right)\mathrm{d}x$
have to be estimated and they are easier to handle (see for example \cite[Chapter 2]{ivicriemann}).
 At the expense of negligible subsets of $[0,+\infty)^d$ and $[0,+\infty)$, we will be able to overcome such difficulties, by making use of estimates for mean values of Weyl sums. 
Such estimates are based on the work of Bourgain, Demeter and Guth \cite{zbMATH06662221} regarding the main conjecture in Vinogradov's Mean Value Theorem as well as the work of Salberger and Wooley \cite{zbMATH05797868}. 

 \begin{theorem}\label{Discrete}
Let $d\geq2$ be an integer. Then there exists an effectively computable number $\mathbf{S}(d)\in(1/2,1)$, which tends to $1$ as $d$ tends to infinity, such that, for any $\mathbf{S}(d)<\sigma\leq1$ and almost every $(a_1,a_2,\dots,a_n)\in[0,+\infty)^d$,
\begin{align*}
\lim\limits_{N\to\infty}\dfrac{1}{N}\sum\limits_{n=1}^{N}\left|\zeta\left(s+i\left(a_1n+a_2n^2+\dots+a_dn^d\right)\right)\right|^2
=\zeta(2\sigma).
\end{align*}
\end{theorem}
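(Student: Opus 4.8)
The strategy is to expand $\zeta(s+i\cdot)$ as a Dirichlet series, interchange summation, and reduce the problem to estimating averages of Weyl sums, which we then control via Vinogradov-type mean value bounds. Let me sketch the steps.

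First I would fix $\sigma$ in the range we are after and use the fact that for $\sigma>1$ the Dirichlet series $\zeta(s+iy)=\sum_{m\geq1}m^{-s-iy}$ converges absolutely and uniformly in $y$; by a standard approximation (e.g.\ an approximate functional equation, or simply the convexity/subconvexity bound together with a smoothed partial sum) one obtains, for $\sigma$ above some threshold, a representation $\zeta(s+iy)=\sum_{m\le M}m^{-s-iy}+(\text{small})$ valid uniformly for $y$ in a long range, where $M$ is a suitable power of $N$. This is where the first constraint on $\mathbf{S}(d)$ enters: the error in truncating the Dirichlet series must be negligible after averaging over $n\le N$, and the admissible $\sigma$ depends on the quality of the bound for $\zeta$ slightly to the left of $\sigma=1$.

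\smallskip
\emph{Second step: expand the square.} Writing $\sigma_0=s+i(a_1n+\dots+a_dn^d)$, squaring out gives a diagonal contribution $\sum_{m\le M}m^{-2\sigma}$ (which tends to $\zeta(2\sigma)$ as $M\to\infty$) plus an off-diagonal contribution of the form
\begin{align*}
\frac{1}{N}\sum_{n=1}^{N}\ \sum_{\substack{m,k\le M\\ m\neq k}}\frac{1}{m^{s}k^{\overline s}}\exp\!\left(-i\log(m/k)\bigl(a_1n+\dots+a_dn^d\bigr)\right).
\end{align*}
The goal is to show this off-diagonal term tends to $0$ for almost every choice of $(a_1,\dots,a_d)$. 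Pulling the $n$-sum inside, for each fixed pair $(m,k)$ with $m\ne k$ one is left with a Weyl sum $\sum_{n\le N}e\bigl(\beta_1 n+\dots+\beta_d n^d\bigr)$ where $\beta_j=-\tfrac{1}{2\pi}a_j\log(m/k)$. The key point is that for Lebesgue-almost every $(a_1,\dots,a_d)$, the vector $(\beta_1,\dots,\beta_d)$ has well-approximable coordinates only on a measure-zero set; more precisely, one uses the mean-value estimate (Vinogradov's Mean Value Theorem in the Bourgain--Demeter--Guth form, together with Salberger--Wooley for the lower-degree terms) to bound $\int_{[0,1]^d}\bigl|\sum_{n\le N}e(\beta_1n+\dots+\beta_dn^d)\bigr|^{2}\,d\beta$, or a suitable higher moment, by $N^{1+\varepsilon}$ times the expected main term, and then transfers this to an average over $(a_1,\dots,a_d)$ via the change of variables $\beta_j=-\tfrac{1}{2\pi}a_j\log(m/k)$ for each fixed $m\ne k$.

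\smallskip
\emph{Third step: almost-everywhere convergence.} Combining the moment bound with a Borel--Cantelli / Gallagher-type argument along a sufficiently dense sequence of $N$, and using monotonicity to fill the gaps, I would conclude that for almost every $(a_1,\dots,a_d)\in[0,\infty)^d$ the off-diagonal term is $o(1)$ as $N\to\infty$, uniformly enough in $m,k$ after summing the tails $\sum_{m,k}(mk)^{-\sigma}$, which converges precisely because $2\sigma>1$ and $M$ is only a small power of $N$. Letting $N\to\infty$ and then $M\to\infty$ yields the claimed limit $\zeta(2\sigma)$. The threshold $\mathbf{S}(d)$ is then the maximum of the exponent forced by the truncation error in step one and the exponent at which the Weyl-sum mean value estimate becomes strong enough to kill the off-diagonal terms; that the latter improves to give $\mathbf{S}(d)\to1$ follows from the fact that Vinogradov's Mean Value Theorem is essentially sharp for large $d$, so the saving in the off-diagonal grows with $d$.

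\smallskip
\emph{Main obstacle.} The delicate part is the passage from the mean-value estimate for Weyl sums in the frequency variables $\beta$ to an almost-everywhere statement in the parameters $a=(a_1,\dots,a_d)$: one must handle \emph{all} pairs $m\ne k\le M$ simultaneously, so the change of variables $\beta_j=-\tfrac{a_j}{2\pi}\log(m/k)$ has a Jacobian $(\log(m/k))^d$ that degenerates as $m/k\to1$, i.e.\ when $m,k$ are close. Controlling this requires splitting the $(m,k)$ sum according to the size of $|\log(m/k)|$ and being careful that the contribution of near-diagonal pairs (where the Weyl sum is not yet equidistributed at length $N$) is absorbed either into the main term or made negligible by the $(mk)^{-\sigma}$ weights — and it is exactly this balancing that pins down how close to $1$ we can push $\sigma$.
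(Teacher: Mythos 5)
Your proposal follows essentially the same route as the paper: truncate $\zeta$ via an approximate functional equation whose admissible range of $\sigma$ is governed by the Vinogradov--Korobov/Ford bound, split the squared Dirichlet polynomial into diagonal and off-diagonal terms, and control the off-diagonal Weyl sums for almost every $\underline{a}$ by combining the Bourgain--Demeter--Guth mean value theorem with a change of variables and a Borel--Cantelli argument, including the careful treatment of the degenerating Jacobian when $m/k$ is close to $1$. The only cosmetic difference is that Salberger--Wooley is needed for the monomial case (Theorem \ref{Discrete_mo}), not here, and the required moment is the $2h$-th with $h=d(d+1)/2$ rather than the second; both points are consistent with your ``suitable higher moment'' caveat.
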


\begin{theorem}\label{Discrete_mo}
Let $d\geq2$ be an integer.
Then there exists an effectively computable number $\mathbf{S}_{\mathrm{\mathbf{mo}}}(d)\in(1/2,1)$, which tends to $1$ as $d$ tends to infinity, such that, for any $\mathbf{S}_{\mathrm{\mathbf{mo}}}(d)<\sigma\leq1$ and almost every $a\in[0,+\infty)$,
\begin{align*}
\lim\limits_{N\to\infty}\dfrac{1}{N}\sum\limits_{n=1}^{N}\left|\zeta\left(s+ian^d\right)\right|^2
=\zeta(2\sigma).
\end{align*}
 \end{theorem}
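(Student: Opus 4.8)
The plan is to expand $\zeta$ into a short Dirichlet polynomial, peel off a diagonal term yielding $\zeta(2\sigma)$, and then show that the remaining off-diagonal term — essentially a weighted sum of truncated Weyl sums $\sum_{n}e(\beta n^{d})$ — tends to $0$ for almost every $a$, the decay being extracted from a mean value estimate for Weyl sums. Write $s=\sigma+it$ with $t$ fixed, and abbreviate $E_{N}(a):=\frac1N\sum_{n\le N}|\zeta(s+ian^{d})|^{2}$.

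\emph{From the zeta-function to Weyl sums.} For every $n$ with $t+an^{d}\ge 2$ I would insert the standard approximation $\zeta(\sigma+iv)=\sum_{m\le v}m^{-\sigma-iv}+O(v^{-\sigma})$, valid uniformly for $\sigma$ in a fixed positive range (cf.\ \cite[Ch.~1]{ivicriemann}), with $v=t+an^{d}$; the $O(1)$ remaining $n$ contribute $O_{a}(1/N)$ to $E_{N}(a)$. Squaring and summing, the cross and square contributions of the error term are $\ll_{a}\frac1N\sum_{n\le N}\big((an^{d})^{1-2\sigma}+(an^{d})^{-2\sigma}\big)\ll_{a}1/N$ once $\sigma>\tfrac12+\tfrac1{2d}$, so $E_{N}(a)$ equals $\frac1N\sum_{n\le N}\big|\sum_{m\le t+an^{d}}m^{-s-ian^{d}}\big|^{2}$ up to $o_{a}(1)$. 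Opening the square, the terms with $m=k$ give $\sum_{m}m^{-2\sigma}\cdot\frac1N\#\{n\le N:t+an^{d}\ge m\}$, which tends to $\sum_{m}m^{-2\sigma}=\zeta(2\sigma)$ by dominated convergence ($\sigma>1/2$). It remains to prove that $R_{N}(a)\to 0$ for almost every $a$, where
\[R_{N}(a):=\sum_{m\neq k}(mk)^{-\sigma}\big(\tfrac{k}{m}\big)^{it}\frac1N\!\!\sum_{n_{0}(m,k,a)\le n\le N}\!\!e\Big(\tfrac{a\log(k/m)}{2\pi}\,n^{d}\Big),\qquad n_{0}(m,k,a)=\Big(\tfrac{\max(m,k)-t}{a}\Big)^{1/d},\]
and the outer sum is finite because its summand vanishes unless $\max(m,k)\le t+aN^{d}$.

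\emph{Averaging in $a$ via mean values of Weyl sums.} The essential arithmetic input is a Vinogradov-type mean value bound $\int_{0}^{1}\big|\sum_{n\le N}e(\beta n^{d})\big|^{2s_{0}}d\beta\ll N^{2s_{0}-d+\varepsilon}$ for an effectively computable $s_{0}=s_{0}(d)$: Hua's lemma already gives $s_{0}\le 2^{d-1}$, while the results of Bourgain--Demeter--Guth \cite{zbMATH06662221} together with those of Salberger--Wooley \cite{zbMATH05797868} make $s_{0}(d)$ substantially smaller. Dominating $\max_{M\le N}\big|\sum_{n\le M}e(\beta n^{d})\big|^{2s_{0}}$ by $\sum_{M\le N}\big|\sum_{n\le M}e(\beta n^{d})\big|^{2s_{0}}$ and using Hölder's inequality yields $\int_{0}^{1}\max_{M\le N}\big|\sum_{n\le M}e(\beta n^{d})\big|\,d\beta\ll N^{1-\delta_{d}+\varepsilon}$ with $\delta_{d}:=\tfrac{d-1}{2s_{0}(d)}\in(0,1)$. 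Since the inner truncated sum in $R_{N}(a)$ is $\le\tfrac2N\max_{M\le N}\big|\sum_{n\le M}e(\beta n^{d})\big|$ and these maximal sums are $1$-periodic in $\beta$, the substitution $\beta=\tfrac{a\log(k/m)}{2\pi}$ gives, for each fixed pair with $k>m$,
\[\int_{1/A}^{A}\Big|\frac1N\!\!\sum_{n_{0}\le n\le N}\!\!e\Big(\tfrac{a\log(k/m)}{2\pi}n^{d}\Big)\Big|\,da\ \ll\ \Big(\tfrac{1}{\log(k/m)}+A\Big)N^{-\delta_{d}+\varepsilon}.\]
Weighting by $(mk)^{-\sigma}$ and summing over $1\le m<k\le t+AN^{d}$, with the elementary bounds $\sum_{m<k\le X}(mk)^{-\sigma}\ll X^{2-2\sigma+\varepsilon}$ and $\sum_{m<k\le X}(mk)^{-\sigma}/\log(k/m)\ll X^{2-2\sigma+\varepsilon}$ (read as $X^{\varepsilon}$ when $\sigma=1$), I obtain $\int_{1/A}^{A}|R_{N}(a)|\,da\ll_{A}N^{2d(1-\sigma)-\delta_{d}+\varepsilon}$, which is $\ll_{A}N^{-\delta_{d}/4}$ whenever $\sigma\ge 1-\tfrac{\delta_{d}}{4d}$.

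\emph{From the integrated bound to the pointwise limit, and the size of $\mathbf S_{\mathbf{mo}}(d)$.} Put $K_{0}:=\lceil 8/\delta_{d}\rceil$ and $\mathbf S_{\mathbf{mo}}(d):=\max\big\{\tfrac12+\tfrac1{2d},\,1-\tfrac1{2dK_{0}}\big\}$, which lies in $(1/2,1)$ and is effectively computable; fix $\mathbf S_{\mathbf{mo}}(d)<\sigma\le1$ (note this forces $\sigma\ge 1-\tfrac{\delta_{d}}{4d}$). Along $N_{j}=j^{K_{0}}$ the series $\sum_{j}\int_{1/A}^{A}|R_{N_{j}}(a)|\,da$ converges, so by Borel--Cantelli $R_{N_{j}}(a)\to 0$, hence $E_{N_{j}}(a)\to\zeta(2\sigma)$, for almost every $a\in[1/A,A]$. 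To pass to all $N$ I would use $|E_{N+1}(a)-E_{N}(a)|=\tfrac1{N+1}\big|\,|\zeta(s+ia(N+1)^{d})|^{2}-E_{N}(a)\big|$ and the convexity bound $\zeta(\sigma+iv)\ll_{\sigma,\varepsilon}(1+|v|)^{(1-\sigma)/2+\varepsilon}$ to get $|E_{N+1}(a)-E_{N}(a)|\ll_{a,\sigma,\varepsilon}N^{d(1-\sigma)-1+\varepsilon}$; summing over $N_{j}\le N<N_{j+1}$ and using $N_{j+1}-N_{j}\ll N_{j}^{1-1/K_{0}}$ yields $|E_{N}(a)-E_{N_{j}}(a)|\ll_{a,\sigma,\varepsilon}N_{j}^{d(1-\sigma)-1/K_{0}+\varepsilon}\to 0$ precisely because $\sigma>1-\tfrac1{2dK_{0}}$. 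Hence $E_{N}(a)\to\zeta(2\sigma)$ for almost every $a\in[1/A,A]$, and letting $A\to\infty$ through the integers covers almost every $a\in[0,+\infty)$. Finally $\mathbf S_{\mathbf{mo}}(d)\to 1$ as $d\to\infty$ because $\delta_{d}<1$ forces $1-\tfrac1{2dK_{0}}\in\big(1-\tfrac1{16d},1\big)$.

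\emph{Main obstacle.} The crux is the off-diagonal step: the weight $1/\log(k/m)$ makes $\sum_{m<k}(mk)^{-\sigma}/\log(k/m)$ diverge for $\sigma\le 1$, so the trivial estimate for the (truncated) Weyl sums is useless and one must extract genuine cancellation, which is available only on average over $a$ and only through the mean value bound; weighing that saving against the truncation length $t+aN^{d}\asymp N^{d}$ is exactly what confines $\sigma$ to an interval that collapses to $\{1\}$ as the available mean value exponent $s_{0}(d)$ worsens with $d$. A secondary, but genuine, loss occurs in the passage from convergence along $\{N_{j}\}$ to convergence along all $N$, which is where pointwise bounds for $|\zeta|$ near $\sigma=1$ enter. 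Theorem~\ref{Discrete} is proved along the same lines, with the one-dimensional mean value replaced by the full $d$-dimensional Vinogradov mean value of \cite{zbMATH06662221} and the averaging carried out over $(a_{1},\dots,a_{d})\in[0,+\infty)^{d}$; the approach goes back to Reich \cite{ReichE} in the case $d=1$, where the Weyl sum is a geometric series and no averaging in $a$ is needed.
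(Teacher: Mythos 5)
Your argument is correct, but it follows a genuinely different route from the paper on the two decisive points. First, the approximate functional equation: you use the classical full-length expansion $\zeta(\sigma+iv)=\sum_{m\le v}m^{-\sigma-iv}+O(v^{-\sigma})$, whose only cost is $\sigma>1/2+1/(2d)$ and which avoids the Vinogradov--Korobov machinery entirely; the paper instead builds (Proposition \ref{AFE}) a \emph{short} Dirichlet polynomial of length $(t+an^{d})^{\mu}$ via Perron's formula and Ford's bound, paying the price $\sigma\ge\mathbf{A}(\mu)+\delta$ but shrinking the off-diagonal range to $k,\ell\le N^{d\mu}$. Your choice forces the off-diagonal weight to be $N^{2d(1-\sigma)}$ rather than $N^{2d\mu(1-\sigma)}$, so your admissible strip is narrower; since the theorem only asserts existence of some $\mathbf{S}_{\mathrm{\mathbf{mo}}}(d)\in(1/2,1)$ tending to $1$, this is acceptable, while the paper's optimization over $\mu$ buys a wider strip. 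Second, the metric step: the paper runs Borel--Cantelli at the level of the Weyl sums alone (Lemma \ref{Wol}), obtaining for a.e.\ $a$ bounds valid for \emph{all} $N\ge K_a$ and all relevant pairs $(k,\ell)$, so the exceptional set is independent of $s$ and no subsequence is needed; you instead integrate $|R_N(a)|$ directly, apply Borel--Cantelli along $N_j=j^{K_0}$, and interpolate between consecutive $N_j$ using the convexity bound, which introduces the extra (harmless) constraint $\sigma>1-1/(2dK_0)$ and an exceptional set depending on $s$. Both are valid for the statement as written. Two small points: (i) your parenthetical claim that Bourgain--Demeter--Guth improves $s_{0}(d)$ for the single-equation mean value is not right as stated, since $M_{h,d}(N)\ge J_{h,d}(N)$ (every solution of the full system solves the degree-$d$ equation); this is immaterial because Hua's lemma, which you invoke, suffices, and it also covers $d=2$, which the paper must treat separately from Salberger--Wooley. (ii) The paper exploits Salberger--Wooley in the small-$h$ regime $h\le(d+1)/2$, where the saving per unit of moment is much larger than the $\delta_d=(d-1)/2^{d}$ you extract from Hua; this is another reason its strip is wider, though again only the sign of the saving matters for the qualitative result.
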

 
Theorem \ref{Discrete} and Theorem \ref{Discrete_mo} come with two drawbacks. 
The first one is that we are able to prove them for almost all but not all vectors $(a_1,\dots,a_d)\in[0,+\infty)^d$, respectively numbers $a\in[0,+\infty)$.
  The second is with respect to the numbers $ \mathbf{S}(d)$ and $\mathbf{S}_{\mathrm{\mathbf{mo}}}(d)$, which, as we will also see in Section \ref{Lindel}, tend to $1$ from the left as $d$ tends to infinity. 
 It would be desirable to have $ \mathbf{S}(d)=\mathbf{S}_{\mathrm{\mathbf{mo}}}(d)=1/2$, for any $d\geq2$. 
 And indeed, under the Lindel\"of hypothesis, we can obtain such results.
  We will return to this discussion in the aforementioned section.
  
The discrete second moments of Theorem \ref{Discrete} and Theorem \ref{Discrete_mo} are always realized on the vertical line $1+i\mathbb{R}$.
  This is not unexpected for $d=1$ since we know that the sequence $(an)_{n\in\mathbb{N}}$ is {\it uniformly distributed modulo 1} for irrational numbers $a$ and the function 
  \begin{align*}
  \mathbb{R}\ni t\longmapsto\zeta(\sigma+it)
  \end{align*} 
  is {\it uniformly-almost-periodic} for $\sigma>1$ and $B^2${\it -almost-periodic} for $1/2<\sigma\leq1$.
  We refer to \cite{kuipers2012uniform} and \cite{zbMATH03110423} for the notions of uniformly distributed sequences and almost-periodic functions, respectively.
 It also comes by no surprise that Theorem \ref{Discrete} and Theorem \ref{Discrete_mo} are actually seen to hold whenever $d\geq2$ and $\sigma>1$. 
 However, this case is rather trivial as we will see in Section \ref{absolu}.
Our main purpose is to show that we can break through the $1+i\mathbb{R}$ "barrier`` when it is about shifts of $\zeta(s)$ with respect to polynomials of arbitrary degree and, more importantly, that the discrete second moments of $\zeta(s)$ are almost always independent of the polynomial in discussion.

The paper is organized as follows.
 In Section \ref{AFES} and Section \ref{Estimates of Exponential Sums} we lay the groundwork for the proofs of Theorem \ref{Discrete} and Theorem \ref{Discrete_mo} which will be given in Section \ref{proofs}. 
 In Section \ref{Lindel} we discuss some conditional improvements of our theorems and in Section \ref{absolu} we consider discrete second moments in the half-plane $\sigma>1$.
We conclude in Section \ref{genconc} with a few remarks regarding further possible generalizations of our results.  
  
\section{An Approximate Functional Equation}\label{AFES}
  The main result of this section is Proposition \ref{AFE} but before proceeding further we recall some common notations. 
  The {\it Vinogradov symbols} $\ll$ and $\gg$ have their usual meaning, and if the implied constant depends on some parameter $\epsilon$ (say), then we write $\ll_\epsilon$ to draw attention to this fact. 
  The same comments apply to the {\it Landau symbols} $o(\cdot)$ and $O(\cdot)$. 
  Moreover, all constants appearing in this section, implicit or not, are effectively computable.
  
 We define at first the function
  \begin{align}\label{A}
  (0,+\infty)\ni\mu\longmapsto\mathbf{A}(\mu):=\left\{
\renewcommand{\arraystretch}{1.5}
\begin{array}{ll}1-\mu^{-1},&\text{if}\,\,\,\mu\geq1,\\
\min\left\{\dfrac{1}{2\mu},1-\theta\mu^2\right\},&\text{if}\,\,\,0<\mu<1,
\end{array}
\right.
  \end{align}
  where $\theta:=4/(27\eta^2)$ and $\eta:=4.45$.
\begin{proposition}\label{AFE}
Let $\mu>0$ and $0<\delta<1-\mathbf{A}(\mu)$.
Then there exists a real number $\nu=\nu(\mu,\delta)>0$, such that
\begin{align*}
\zeta(s)
=\sum\limits_{0\leq n\leq t^\mu}\dfrac{1}{n^s}+O_{\mu,\delta}\left(t^{-\nu}\right),\,\,\,t\geq t_0>1,
\end{align*}
uniformly in $\mathbf{A}(\mu)+\delta\leq\sigma\leq1$.
\end{proposition}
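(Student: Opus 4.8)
The plan is to derive this from a standard approximate functional equation for $\zeta(s)$ together with known convexity-type bounds for $\zeta$ on vertical lines, distinguishing the two regimes $\mu\geq 1$ and $0<\mu<1$ exactly as in the definition of $\mathbf{A}(\mu)$. First I would recall the classical approximate functional equation (as in Titchmarsh or Ivić): for $\sigma$ in a fixed vertical strip and $t$ large, one has
\begin{align*}
\zeta(s)=\sum_{n\leq x}\frac{1}{n^s}+\chi(s)\sum_{n\leq y}\frac{1}{n^{1-s}}+\text{(error)},
\end{align*}
with $xy=t/(2\pi)$, where $\chi(s)$ comes from the functional equation and satisfies $|\chi(s)|\asymp (t/2\pi)^{1/2-\sigma}$. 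Taking $x=t^\mu$ forces $y\asymp t^{1-\mu}$, and the point is to show that both the dual sum $\chi(s)\sum_{n\leq y}n^{s-1}$ and the error term are $O_{\mu,\delta}(t^{-\nu})$ for some $\nu>0$, uniformly for $\mathbf{A}(\mu)+\delta\leq\sigma\leq 1$.

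The key computation is bounding the dual term. Trivially $\sum_{n\leq y}n^{\sigma-1}\ll y^{\sigma}\ll t^{(1-\mu)\sigma}$ when $\sigma$ stays away from $0$, so $|\chi(s)|\cdot\sum_{n\leq y}n^{\sigma-1}\ll t^{1/2-\sigma}\cdot t^{(1-\mu)\sigma}=t^{1/2-\mu\sigma}$. Hence in the range $\mu\geq 1$, this is $\ll t^{1/2-\mu\sigma}$, which is $t^{-\nu}$ precisely when $\sigma>1/(2\mu)$; since $\mathbf{A}(\mu)=1-\mu^{-1}\geq 1/(2\mu)$ iff $\mu\geq 3/2$, for $\mu\geq 3/2$ we are done with $\nu=\mu\delta$ roughly, and for $1\leq\mu<3/2$ one uses $\sigma\geq\mathbf{A}(\mu)+\delta=1-\mu^{-1}+\delta$ which still gives $1/2-\mu\sigma\leq 1/2-\mu(1-\mu^{-1})-\mu\delta=-1/2-\mu\delta<0$. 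For $0<\mu<1$ the same trivial bound gives the threshold $\sigma>1/(2\mu)$, explaining the first branch of the minimum; but since $1/(2\mu)$ can exceed $1$, one instead wants to use a nontrivial bound for the short dual sum $\sum_{n\leq y}n^{-(1-s)}$ with $y\asymp t^{1-\mu}$ small. Here is where the constants $\eta=4.45$ and $\theta=4/(27\eta^2)$ enter: one invokes a subconvexity estimate of the shape $\zeta(\sigma_0+it)\ll t^{c(1-\sigma_0)^{3/2}}$ valid near $\sigma=1$ (a Vinogradov–Korobov-type bound, with $\eta$ the relevant explicit constant), applied after writing the dual sum back in terms of $\zeta$ via partial summation, or equivalently by applying such a bound directly to $\zeta(s)$ and comparing. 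The exponent $1-\theta\mu^2$ is exactly the value of $\sigma$ at which the main-sum truncation error, estimated through the Vinogradov–Korobov zero-free-region-type bound with parameter $\eta$, becomes a negative power of $t$.

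Concretely, I would organize the proof as: (i) state the approximate functional equation with $x=t^\mu$, $y=t^{1-\mu}$, and note the raw error term there is already $\ll t^{-\mu\sigma+\varepsilon}$ or similar, hence a negative power of $t$ throughout the claimed $\sigma$-range; (ii) bound $\chi(s)$ by Stirling, $|\chi(\sigma+it)|\ll t^{1/2-\sigma}$; (iii) in the regime $\mu\ge 1$ bound the dual sum trivially and check the arithmetic $1/2-\mu\sigma<0$ holds for $\sigma\ge\mathbf{A}(\mu)+\delta$, choosing $\nu$ explicitly in terms of $\mu,\delta$; (iv) in the regime $0<\mu<1$ split according to which term of the minimum defining $\mathbf{A}(\mu)$ is active — if $1/(2\mu)\le 1-\theta\mu^2$ use the trivial dual bound as before, and if $1-\theta\mu^2<1/(2\mu)$ use instead the Vinogradov–Korobov bound $\zeta(s)\ll t^{A(1-\sigma)^{3/2}}$ with the constant pinned down by $\eta=4.45$ to absorb the dual sum, verifying that $\sigma>1-\theta\mu^2$ makes the resulting exponent negative; (v) collect the worst of the finitely many negative exponents and call it $-\nu$. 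The main obstacle is step (iv): making the Vinogradov–Korobov estimate fully explicit with the stated constant $\theta=4/(27\eta^2)$, $\eta=4.45$, and checking that the break-even exponent is exactly $1-\theta\mu^2$ — this requires care with the cube-root-in-$(1-\sigma)$ shape of the bound and with where the factor $4/27$ comes from (it is the maximum of $u^2(1-u)$-type expressions that typically arises in optimizing the Vinogradov–Korobov argument). Everything else is bookkeeping with Stirling's formula and crude partial summation.
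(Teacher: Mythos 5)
Your decomposition into the three regimes matching the branches of $\mathbf{A}(\mu)$ is the right skeleton, and your treatment of the middle regime ($\mu_0<\mu<1$, where $\mathbf{A}(\mu)=1/(2\mu)$) is exactly the paper's: Hardy--Littlewood's two-sum approximate functional equation with $x=t^\mu$, Stirling for $\chi$, and the trivial bound on the dual sum giving $t^{1/2-\mu\sigma}$. But the other two regimes have genuine problems. For $\mu\geq1$ you again invoke the Hardy--Littlewood equation, whose hypothesis $y\geq1$ fails once $\mu>1$ (then $y\asymp t^{1-\mu}<1$); the paper instead uses the one-sum equation $\zeta(s)=\sum_{n\leq x}n^{-s}+\frac{x^{1-s}}{s-1}+O(x^{-\sigma})$ valid for $\pi x\geq t$, and the threshold $\mathbf{A}(\mu)=1-\mu^{-1}$ comes from the pole term $x^{1-s}/(s-1)\ll t^{\mu(1-\sigma)-1}$, which you never consider. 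Your own arithmetic in this regime is also wrong: $1/2-\mu(1-\mu^{-1})=3/2-\mu$, not $-1/2$, so your verification for $1\leq\mu<3/2$ does not close.

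The more serious gap is the regime $0<\mu\leq\mu_0$, where $\mathbf{A}(\mu)=1-\theta\mu^2$, which you flag as ``the main obstacle'' and leave unresolved. Your plan is to beat the trivial bound on the dual sum $\chi(s)\sum_{n\leq y}n^{s-1}$ with $y\asymp t^{1-\mu}$ by partial summation and a Vinogradov--Korobov input, but you do not show that this yields a negative exponent precisely for $\sigma>1-\theta\mu^2$, and it is not clear it does: one would need cancellation in $\sum_{n\leq y}n^{it}$ for $y$ nearly as large as $t$, which is essentially the content of the bound one is trying to exploit, and the bookkeeping is circular as sketched. The paper avoids the dual sum entirely here: it applies a truncated Perron formula to $\sum_{n\leq x}n^{-s}$ with $x=\lfloor t^\mu\rfloor+1/2$, shifts the contour to $\mathrm{Re}(z)=1-3\kappa-\sigma$, bounds the horizontal segments by the $\epsilon$-bound $\zeta(s)\ll_\epsilon|t|^\epsilon$ near $\sigma=1$ and the left vertical segment by Ford's explicit estimate $\zeta(s)\ll|t|^{\eta(1-\sigma)^{3/2}}(\log|t|)^{2/3}$ with $\eta=4.45$, and then optimizes the resulting exponent $\kappa(-2\mu+3^{3/2}\kappa^{1/2}\eta)$ by taking $\kappa=\theta\mu^2=4\mu^2/(27\eta^2)$ (this is where the $4/27$ comes from --- it is the optimization of $\kappa$ in that expression, not of a $u^2(1-u)$ term). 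Without this contour argument, or a completed version of your partial-summation route, the proof of the hardest branch of the proposition is missing.
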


The proof of the proposition is based on some well-known order estimates and approximate functional equations for $\zeta(s)$, as well as Perron's formula.
\begin{lemma}\label{sapprox}
If $\epsilon\in(0,1)$, then
\begin{align*}
\zeta(s)\ll_\epsilon|t|^\epsilon,\,\,\,\,\,|t|\geq t_0>0,
\end{align*}
uniformly in $1-\epsilon\leq\sigma\leq2$.
\end{lemma}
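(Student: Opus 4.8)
The plan is to read the estimate off the truncated Euler--Maclaurin expansion of $\zeta(s)$, which is exactly the type of ``well-known order estimate'' alluded to above. I would start from the standard identity $\zeta(s)=\sum_{n\leq x}n^{-s}-x^{1-s}/(1-s)+O\!\left(|s|x^{-\sigma-1}\right)$, valid for $\sigma>0$ and $x\geq1$ (see, e.g., \cite{ivicriemann}). Specializing $x=|t|$ and using $|1-s|\geq|t|\geq1$ to absorb the middle term, this gives, for every fixed $\sigma_0>0$,
\[
\zeta(s)=\sum\limits_{n\leq|t|}\frac{1}{n^s}+O_{\sigma_0}\!\left(|t|^{-\sigma}\right),\qquad \sigma\geq\sigma_0,\ |t|\geq1 .
\]

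Next I would split the range of $t$. For $t_0\leq|t|\leq1$ the set $\{s:\,1-\epsilon\leq\sigma\leq2,\ t_0\leq|t|\leq1\}$ is compact and avoids the pole $s=1$ (which lies at $t=0$), so $|\zeta(s)|$ is bounded there by a constant depending only on $\epsilon$ and $t_0$, and the claim follows from $|t|^\epsilon\geq t_0^\epsilon$. For $|t|\geq1$ and $1-\epsilon\leq\sigma\leq2$, the displayed expansion with $\sigma_0=1-\epsilon$ yields
\[
|\zeta(s)|\leq\sum\limits_{n\leq|t|}\frac{1}{n^\sigma}+O_\epsilon(1)\leq\sum\limits_{n\leq|t|}\frac{1}{n^{1-\epsilon}}+O_\epsilon(1)\ll_\epsilon|t|^\epsilon ,
\]
where the middle inequality uses $n^{-\sigma}\leq n^{-(1-\epsilon)}$ for all $n\geq1$ whenever $\sigma\geq1-\epsilon$, and the last step compares the sum with $\int_{1}^{|t|}u^{-(1-\epsilon)}\,\mathrm{d}u=(|t|^\epsilon-1)/\epsilon$. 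This proves the lemma.

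I do not expect a genuine obstacle here: the only points requiring attention are that the error term in the approximate formula stays uniform all the way down to $\sigma=1-\epsilon$ (it does, being uniform on $\sigma\geq\sigma_0$ for each fixed $\sigma_0>0$) and that the pole at $s=1$ does no harm, which is precisely why the hypothesis $|t|\geq t_0>0$ is imposed. A completely equivalent route would be to combine the classical convexity bound $\zeta(\sigma+it)\ll_\delta|t|^{(1-\sigma)/2+\delta}$ on $1/2\leq\sigma\leq1$ with $\zeta(\sigma+it)\ll\log|t|$ on $\sigma\geq1$ and take $\delta=\epsilon/2$; the Euler--Maclaurin argument above is merely the most direct self-contained packaging.
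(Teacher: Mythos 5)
Your argument is correct and is essentially the proof behind the reference the paper relies on (Apostol, Theorem 12.23, cited without reproduction): a truncated Euler--Maclaurin/partial-summation expansion of $\zeta(s)$ with cutoff $x=|t|$, followed by the elementary comparison $\sum_{n\leq|t|}n^{-\sigma}\leq\sum_{n\leq|t|}n^{-(1-\epsilon)}\ll_\epsilon|t|^{\epsilon}$. The only cosmetic point is the exponent in your first error term: the standard unconditional form is $O\!\left(|s|x^{-\sigma}/\sigma\right)$ (or $O(x^{-\sigma})$ under a side condition $x\gg|t|$, as in the version the paper itself quotes from Ivi\'c), rather than $O\!\left(|s|x^{-\sigma-1}\right)$; but with $x=|t|$ and $\sigma\geq1-\epsilon$ every such version contributes at most $O_\epsilon\!\left(|t|^{\epsilon}\right)$, so the conclusion is unaffected.
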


\begin{proof}
For a proof see \cite[Theorem 12.23]{apostol2013introduction}.
\end{proof}

The next lemma has its origins in the work of Vinogradov and Korobov regarding zero-free regions of $\zeta(s)$.
It has undergone through the decades many generalizations and improvements. We present here the latest version due to Ford  \cite[Theorem 1]{ford2002vinogradov}:

\begin{lemma}\label{apprx}
The following bound
\begin{align}\label{boundL}
\zeta(s)\ll |t|^{\eta(1-\sigma)^{3/2}}\left(\log |t|\right)^{2/3},\,\,\,\,\,|t|\geq t_0>1,
\end{align}
holds uniformly in $1/2\leq\sigma\leq1$.
\end{lemma}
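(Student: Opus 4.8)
The plan is to reproduce the Vinogradov--Korobov estimate in the explicit shape established by Ford, so I only outline the architecture of the argument and point to where the constant $\eta=4.45$ enters. First I would reduce the claim to a bound for short exponential sums. Starting from a crude approximate formula $\zeta(s)=\sum_{n\le|t|}n^{-s}+O(1)$, valid uniformly in $1/2\le\sigma\le1$ for $|t|\ge t_0$ (which follows from Euler--Maclaurin summation, or from Perron's formula combined with Lemma~\ref{sapprox} to truncate the Dirichlet series), one partitions the sum into $O(\log|t|)$ dyadic blocks $N<n\le2N$ with $1\le N\le|t|$ and applies Abel summation on each. This reduces matters to an estimate of the shape
\begin{align*}
\Bigl|\sum_{N<n\le M}n^{-it}\Bigr|=\Bigl|\sum_{N<n\le M}e\!\left(-\tfrac{t}{2\pi}\log n\right)\Bigr|\ll N^{1-\rho},\qquad N<M\le2N,
\end{align*}
with $e(x):=e^{2\pi i x}$, in which $\rho=\rho(N,|t|)>0$ should be taken as large as possible; the regime that matters is $N\gtrsim|t|^{1/k}$, the contribution of smaller $N$ being controlled trivially against the weight $n^{-\sigma}$.

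Secondly, I would apply the Vinogradov method to a single dyadic block. On $[N,2N]$ the phase $f(x):=-\tfrac{t}{2\pi}\log x$ has $j$-th derivative of size $|f^{(j)}(x)|\asymp_j|t|N^{-j}$; splitting $[N,2N]$ into $\asymp N/H$ subintervals of length $H$ and Taylor expanding $f$ to degree $k$ on each of them, the block sum is dominated by $N/H$ Weyl sums $\sum_{0\le m\le H}e(\alpha_1 m+\dots+\alpha_k m^k)$ whose leading coefficients $\alpha_j\approx f^{(j)}(\cdot)/j!$ vary with the base point. One then invokes Vinogradov's Mean Value Theorem --- in \cite{ford2002vinogradov} in the then-strongest explicit (Stechkin/Karatsuba-type) form, and nowadays in the sharp form supplied by Bourgain--Demeter--Guth \cite{zbMATH06662221} --- to bound the moment $\int_{[0,1]^k}\bigl|\sum_{0\le m\le H}e(\alpha_1 m+\dots+\alpha_k m^k)\bigr|^{2s}\,\mathrm{d}\alpha_1\dots\mathrm{d}\alpha_k$, and converts this into a pointwise estimate by Hölder's inequality together with a counting argument over the base points that exploits the spacing of the coefficients $\alpha_j$ (the minor-arc regime). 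This produces $\rho\gg k^{-2}$.

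Finally comes the optimisation. Taking the Taylor degree $k$ to grow slowly with $|t|$ --- the optimal order being $k\asymp\bigl((\log|t|)/\log\log|t|\bigr)^{1/3}$ --- and then recombining the dyadic-block bounds against the Abel weights while optimising the truncation point of the Dirichlet polynomial, one is led to $\zeta(s)\ll|t|^{\eta(1-\sigma)^{3/2}}(\log|t|)^{2/3}$, the exponents $3/2$ and $2/3$ both emerging from this two-parameter optimisation; for bounded $|t|$, or for $\sigma$ so close to $1$ that Lemma~\ref{sapprox} is already stronger, the bound is immediate.

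The main obstacle is not the qualitative statement --- $\zeta(s)\ll_\varepsilon|t|^{C(1-\sigma)^{3/2}}(\log|t|)^{2/3}$ for some absolute $C$ follows from the above with routine care --- but the explicit value $\eta=4.45$, which demands an explicit mean-value theorem with tracked constants, a careful passage from the average to the individual sum, and a delicate numerical optimisation; this is precisely the content of \cite[Theorem~1]{ford2002vinogradov}, which we therefore simply quote.
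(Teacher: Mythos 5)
Your proposal ends exactly where the paper does: Lemma~\ref{apprx} is not proved in the paper but taken verbatim as \cite[Theorem~1]{ford2002vinogradov}, and you likewise quote Ford for the explicit constant $\eta=4.45$, so the approach is essentially the same. Your preliminary sketch of the Vinogradov--Korobov machinery is reasonable background but is not load-bearing (and some of its tuning details, such as the stated choice of the Taylor degree $k$, are only heuristic), since the actual justification in both cases is the citation.
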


\begin{proof}[Proof of Proposition \ref{AFE}]
The case of $\mu\geq1$ follows immediately from the classical approximate functional equation for $\zeta(s)$:
\begin{align*}
\zeta(s)=\sum\limits_{n\leq x}\dfrac{1}{n^s}+\dfrac{x^{1-s}}{s-1}+O_{\sigma_0}(x^{-\sigma}),\,\,\,t\geq t_0>0,
\end{align*}
where $0<\sigma_0\leq\sigma\leq2$ and $\pi x\geq t$ (see \cite[Theorem 1.8]{ivicriemann}).
We only need to set $x=t^\mu$.

If now $0<\mu<1$, then we see that
\begin{align*}
\mathbf{A}(\mu)=\left\{\begin{array}{lll}
\dfrac{1}{2\mu},&\text{if }\,\,\,\mu_0<\mu<1,\\
\\
1-\theta\mu^2,&\text{if}\,\,\,0<\mu\leq\mu_0,
\end{array}\right.
\end{align*}
where $\mu_0\in[1/2,3/4]$ is the unique real root of the polynomial $Q(x)=2\theta x^3-2 x+1$ in the interval $[0,1]$.

Let $\mu_0<\mu<1$. 
We consider the approximate functional equation for $\zeta(s)$ due to Hardy and Littlewood \cite[Theorem 1]{zbMATH02569960}:
\begin{align*}
\zeta(s)=&\,\sum\limits_{ n\leq x}\dfrac{1}{n^s}+\chi(s)\sum\limits_{n\leq y}\dfrac{1}{n^{1-s}}+O\left(x^{-\sigma}+t^{1/2-\sigma}y^{\sigma-1}\right),\,\,\,t\geq t_0>0,
\end{align*}
where $0\leq\sigma\leq1$, $x\geq1$ and $y\geq1$ are such that $2\pi xy=t$ and
\begin{align*}
\chi(s):=\pi^{s-1/2}\dfrac{\Gamma\left(\frac{1-s}{2}\right)}{\Gamma\left(\frac{s}{2}\right)},
\end{align*}
$\Gamma(s)$ being the Gamma function.
If we set $x=t^{\mu}$, then Stirling's formula (see for example \cite[(A.34)]{ivicriemann}) yields that
\begin{align*}
\zeta(s)-\sum\limits_{ n\leq t^\mu}\dfrac{1}{n^s}
&\ll t^{1/2-\sigma}\sum\limits_{n\leq t^{1-\mu}/(2\pi)}\dfrac{1}{n^{1-\sigma}}+t^{-\sigma\mu}+t^{1/2-\sigma+(\sigma-1)(1-\mu)}\\
&\ll t^{1/2-\sigma+ (1-\mu)\sigma}+t^{-\sigma\mu}+t^{-\sigma\mu+\mu-1/2}\\
&\ll t^{1/2-\sigma\mu}+t^{-1/2-\sigma\mu+\mu},\hspace{5.2cm}t\geq t_0>1,
\end{align*}
uniformly in $0\leq\sigma\leq1$.
In particular, the exponents of $t$ in the latter relation are negative for $$\dfrac{1}{2\mu}<\sigma\leq1.$$

Lastly, let $0<\mu\leq\mu_0$.
If we set $x:=m+1/2$, $m\in\mathbb{N}$, and $c:=1+1/\log(2 x)$, then the absolute convergence of $\zeta(s)$ in the half-plane $\sigma>1$ and a truncated version of Perron's formula (see \cite[Chapter III, Lemma 3.12]{titchmarsh1951theory}) yield that
\begin{align}\label{Perron2}
\dfrac{1}{2\pi i}\int\limits_{c-iT}^{c+iT}\zeta(s+z)\dfrac{x^z}{z}\mathrm{d}z
=\sum\limits_{n=1}^{m}\dfrac{1}{n^s}+O_{\sigma_0}\left(\dfrac{x\log x}{T}\right),\,\,\,T,x\geq t_0>1,
\end{align}
uniformly in $\sigma\geq\sigma_0>0$.

Let  $\sigma\in[1-\kappa+\delta,2]$ be arbitrary, where $\kappa=\kappa(\mu)\in(0,1/6)$ will be explicitly given in the end, and $0<\delta<\kappa$ is fixed. 
Let also $T=2 t$ and consider the rectangle $\mathcal{R}$ with vertices $1-3\kappa-\sigma\pm iT$, $c\pm iT$. 
By the calculus of residues we get
\begin{align}\label{resid}
\begin{split}
\dfrac{1}{2\pi i}\int_{\mathcal{R}}\zeta(s+z)\dfrac{x^z}{z}\mathrm{d}z
=\zeta(s)+\dfrac{x^{1-s}}{1-s}=\zeta(s)+O\left(\dfrac{x^{1-\sigma}}{t}\right),\,\,\,x,t\geq t_0>0.
\end{split}
\end{align} 
Now Lemma \ref{sapprox} implies that
\begin{align}\label{Lin2}
\left\{\int\limits_{1-3\kappa-\sigma-iT}^{c-iT}+\int\limits_{c+iT}^{1-3\kappa-\sigma+iT}\,\right\}\zeta(s+z)\dfrac{x^z}{z}\mathrm{d}z
\ll_\kappa \frac{x^{c}T^{3\kappa}}{T}\ll_\kappa \frac{xt^{3\kappa}}{t},\,\,\,x,t\geq t_0>0,
\end{align}
while Lemma \ref{apprx} yields that
\begin{align}\label{Lin3}
\begin{split}
\int\limits_{1-3\kappa-\sigma+iT}^{1-3\kappa-\sigma-iT}\zeta(s+z)\dfrac{x^z}{z}\mathrm{d}z
&\ll x^{1-3\kappa-\sigma}\int\limits_{-2t}^{2t}\dfrac{\left|\zeta\left(1-3\kappa+i(t+u)\right)\right|}{\left|1-3\kappa-\sigma+iu\right|}\mathrm{d}u\ll_\kappa\dfrac{t^{(3\kappa)^{3/2}\eta}\left(\log t\right)^2}{x^{2\kappa+\delta}},
\end{split}
\end{align}
for $x,t\geq t_0>1$.
From relations (\ref{Perron2})-(\ref{Lin3}) we deduce that
\begin{align*}
\zeta(s)=&\,\sum\limits_{n=1}^{m}\dfrac{1}{n^s}
+O_{\kappa}\left(\dfrac{x\log x+x^{1-\sigma}+xt^{3\kappa}}{t}+\dfrac{t^{(3\kappa)^{3/2}\eta}\left(\log t\right)^2}{x^{2\kappa+\delta}}\right),\,\,\,x,t\geq t_0>1,
\end{align*}
uniformly in $1-\kappa+\delta\leq\sigma\leq2$.
If we set $m=\lfloor t^\mu\rfloor$, then the last term is bounded from above by
\begin{align*}
C(\kappa,\mu,\delta)\left(t^{\mu-1}\log t
+t^{\mu(1-\sigma)-1}+
t^{\mu+3\kappa-1}+t^{\kappa\left(-2\mu+3^{3/2}\kappa^{1/2}\eta\right)-\mu\delta}\left(\log t\right)^2\right),
\end{align*}
where $C(\kappa,\mu,\delta)>0$ is a constant.
It is clear now that for $$
\kappa=\dfrac{4\mu^2}{27\eta^2}=\theta\mu^2$$
the proposition follows also for $0<\mu\leq\mu_0<3/4$.
\end{proof}

\section{Metric Results for Exponential Sums}\label{Estimates of Exponential Sums}

Bounds for exponential sums, especially in the case of Weyl sums, lie at the heart of analytic number theory. 
There is a broad literature regarding methods to estimate them as well as their numerous applications. We refer to \cite{ivicriemann, iwaniec2004analytic, zbMATH05013256} for an exposition of such results.

 In our case we focus on mean value estimates for Weyl sums, that is, on upper bounds for the quantity
 \begin{align*}
 J_{h,d}(N):=\int_{[0,1]^d}\left|\sum\limits_{n=1}^N\exp\left(2\pi i\left(a_1n+\dots+a_dn^d\right)\right)\right|^{2h}\mathrm{d}{a_1}\dots\mathrm{d}a_d,
\end{align*} 
where $h,d$ and $N$ are positive integers. Observe that $J_{h,d}(N)$ denotes the number of integral solutions of the system
\begin{align*}
\begin{array}{ccccccccccc}
X_1&+&\dots&+&X_h&=&X_{h+1}&+&\dots&+&X_{2h}\\
X_1^2&+&\dots&+&X^2_h&=&X^2_{h+1}&+&\dots&+&X^2_{2h}\\
&&\vdots&&&\vdots&&&\vdots&&\\
X_1^d&+&\dots&+&X_h^d&=&X_{h+1}^d&+&\dots&+&X_{2h}^d
\end{array}
\end{align*}
with $1\leq X_1,\dots,X_{2h}\leq N$.
 Recently, Bourgain, Demeter and Guth \cite{zbMATH06662221} proved the so-called main conjecture in Vinogradov's Mean Value Theorem:
\begin{theorem}[Bourgain, Demeter and Guth]\label{BDG}
For any integers $h\geq1$ and $d,N\geq2$,
\begin{align*}
J_{h,d}(N)\ll_{h,d,\epsilon}N^{h+\epsilon}+N^{2h-d(d+1)/2+\epsilon}.
\end{align*}
\end{theorem}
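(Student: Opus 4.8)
Write $S(\mathbf{a})=\sum_{n=1}^N\exp\big(2\pi i(a_1n+\dots+a_dn^d)\big)$ for $\mathbf{a}=(a_1,\dots,a_d)$, so that $J_{h,d}(N)=\int_{[0,1]^d}|S(\mathbf{a})|^{2h}\,\mathrm{d}\mathbf{a}$. The lower bound $J_{h,d}(N)\gg_{h,d}N^h+N^{2h-d(d+1)/2}$ is classical --- the diagonal solutions $X_{h+i}=X_i$ contribute $\gg N^h$, and a circle-method computation together with the positivity of the singular series and singular integral gives the term $\gg N^{2h-d(d+1)/2}$ once $2h>d(d+1)$ --- so the content of the theorem is the upper bound. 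Since $|S(\mathbf{a})|\leq N$, for $h>d(d+1)/2$ one has $|S(\mathbf{a})|^{2h}\leq N^{2h-d(d+1)}|S(\mathbf{a})|^{d(d+1)}$, hence $J_{h,d}(N)\leq N^{2h-d(d+1)}J_{d(d+1)/2,d}(N)$, and the whole theorem reduces to proving $J_{h,d}(N)\ll_{h,d,\epsilon}N^{h+\epsilon}$ for $2h\leq p_d:=d(d+1)$. By H\"older's inequality over $[0,1]^d$ it even suffices to treat the single critical exponent $2h=p_d$.

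The plan is to derive this critical bound from the $\ell^2$ decoupling inequality for the moment curve $\gamma(t)=(t,t^2,\dots,t^d)\subset\mathbb{R}^d$. For $g:[0,1]\to\mathbb{C}$ put $Eg(x)=\int_0^1g(t)\exp\big(2\pi i(x_1t+\dots+x_dt^d)\big)\,\mathrm{d}t$, partition $[0,1]$ into intervals $I$ of length $N^{-1}$, and let $E_Ig$ be the corresponding pieces. The statement to be established is that for every $\epsilon>0$, every ball $B\subset\mathbb{R}^d$ of radius $N^d$, and every $g$,
\begin{align*}
\|Eg\|_{L^{p_d}(w_B)}\ll_{d,\epsilon}N^{\epsilon}\Big(\sum_I\|E_Ig\|_{L^{p_d}(w_B)}^2\Big)^{1/2},
\end{align*}
with $w_B$ a weight adapted to $B$. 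Granting this, a standard discretisation --- taking $g$ to be a sum of unit bumps at the points $n/N$, rescaling $n=Nt$ so that the Vinogradov system is encoded by $\int_B|Eg|^{p_d}$, and noting that the $\asymp N$ single-cap contributions are all of comparable size --- converts the decoupling inequality into $J_{d(d+1)/2,d}(N)\ll_{d,\epsilon}N^{d(d+1)/2+\epsilon}$, which is exactly the bound required.

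To prove the decoupling inequality I would run the Bourgain--Guth broad/narrow induction on the scale $N$, nested inside an induction on the dimension $d$. At an intermediate scale one separates the part of $Eg$ whose mass is spread over caps with quantitatively transversal tangent lines (the "broad" part) from the part concentrating near a lower-degree subvariety (the "narrow" part). The broad part is controlled by a multilinear restriction / multilinear Kakeya estimate in the spirit of Bennett--Carbery--Tao, adapted to the curved, high-codimension geometry of $\gamma$, which yields essentially the sharp bound with no $\epsilon$-loss; the narrow part is handled by a parabolic rescaling that sends the relevant short arc back to a full copy of the moment curve, possibly in a lower dimension, so that the inductive hypothesis applies at a smaller scale. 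The main obstacle --- and the decisive new input of Bourgain, Demeter and Guth --- is the multi-scale bookkeeping: the narrow alternative must be iterated across many scales while invoking lower-dimensional decoupling at each step, and the iteration has to be organised so that the accumulated $N^{\epsilon}$ factors stay under control. The complete argument is carried out in \cite{zbMATH06662221}; an alternative, purely arithmetic route with effective constants is Wooley's (nested) efficient congruencing method, which iterates a $p$-adic congruencing step lifting solutions modulo $p^j$ to solutions modulo $p^{j+1}$ and then transfers the gain back to $J_{h,d}(N)$.
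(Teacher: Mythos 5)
The paper does not actually prove this theorem: it is the main external input, quoted verbatim from Bourgain, Demeter and Guth \cite{zbMATH06662221}, so there is no internal argument to compare yours against. Judged on its own terms, the reductions in your first paragraph are correct and genuinely complete: the diagonal and circle-method lower bounds identify which term dominates in which range, the trivial bound $|S(\mathbf{a})|\leq N$ reduces the supercritical range $2h>d(d+1)$ to the critical exponent, H\"older's inequality on the probability space $[0,1]^d$ disposes of the subcritical range, and the discretisation identifying $J_{d(d+1)/2,d}(N)$ with a weighted integral of $|Eg|^{d(d+1)}$ over a ball of radius $N^{d}$ is the standard transference. Your description of the decoupling proof itself --- broad--narrow induction in the style of Bourgain--Guth, a multilinear Kakeya/restriction input for the transversal part, parabolic rescaling and lower-dimensional decoupling for the narrow part, with the multi-scale bookkeeping as the decisive difficulty --- is likewise an accurate account of what Bourgain, Demeter and Guth actually do, and the aside on Wooley's nested efficient congruencing correctly identifies the alternative route.

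As a proof, however, the proposal has an unavoidable gap which you flag yourself: the $\ell^{2}$ decoupling inequality for the moment curve at the critical exponent $d(d+1)$ is asserted and described, not established. All of the depth of the theorem lives there; in particular, saying that the iteration across scales must be organised so that the accumulated $N^{\epsilon}$ losses stay under control is a statement of the problem rather than a solution of it, and this is precisely the part that resisted proof for eighty years. That is an entirely reasonable thing to defer for a result of this magnitude --- the paper under review defers the whole theorem --- but what you have written should be understood as a correct and well-organised roadmap to the literature, not a self-contained proof.
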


It should be noted here that the case of $d=2$ follows from elementary estimates for the divisor function, while the case of $d=3$ was first solved by Wooley \cite{zbMATH06567880}.

 We will use the latter theorem to obtain a useful metric result.
 Firstly, we introduce some notations which will be kept throughout the rest of the paper.
 We define the function $\mathrm{e}(x):=\exp(2\pi i x)$, $x\in\mathbb{R}$.
 Moreover, $P_{\underline{a}}(x):=a_1x+\dots+a_dx^d$ will denote a polynomial of given degree $d\in\mathbb{N}$, which correspond to a vector of real numbers $\underline{a}=(a_1,\dots,a_d)$.

\begin{lemma}\label{FLM}
 Let $d\geq2$ be an integer and $\epsilon,\mu>0$. 
There is a set $\mathcal{F}(d,\mu,\epsilon)\subseteq[0,+\infty)^d$ of full Lebesgue measure with elements satisfying the following property:

If $\underline{a}\in\mathcal{F}(d,\mu,\epsilon)$ is a vector of real numbers not exceeding an $M_{\underline{a}}\in\mathbb{N}$, then there exists $K_{\underline{a}}\in\mathbb{N}$ such that
\begin{align}\label{comp}
\left|\sum\limits_{n=1}^N\left(\dfrac{k}{\ell}\right)^{iP_{\underline{a}}(n)}\right|^{d(d+1)}
&\ll_{d,\epsilon}\left(\dfrac{\left\lfloor\frac{M_{\underline{a}}}{2\pi}\log\frac{k}{\ell}\right\rfloor+1}{\log\frac{k}{\ell}}\right)^{d}N^{d(d+1)/2+1+2\mu d+3\epsilon}
\end{align}
for every integer $N\geq K_{\underline{a}}$ and any integers $0\leq\ell<k\leq \left(2dM_{\underline{a}}N^d\right)^\mu$.
\end{lemma}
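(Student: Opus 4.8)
The plan is to obtain the pointwise bound \eqref{comp} from the mean value estimate in Theorem~\ref{BDG} by a Borel--Cantelli argument, the starting observation being that $(k/\ell)^{iP_{\underline a}(n)}$ is an exponential of a polynomial in $n$: for $1\le\ell<k$, writing $\lambda=\lambda_{k,\ell}:=\tfrac{1}{2\pi}\log\tfrac{k}{\ell}>0$ one has $(k/\ell)^{iP_{\underline a}(n)}=\mathrm e\bigl(\lambda a_1 n+\dots+\lambda a_d n^d\bigr)$ (the case $\ell=0$ may be discarded, the right-hand side of \eqref{comp} being infinite under the convention $\log(k/0)=+\infty$). Put $h:=d(d+1)/2$, the critical exponent in Vinogradov's mean value theorem, so that $2h=d(d+1)$ is the exponent occurring in \eqref{comp}. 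Fixing $M\in\mathbb N$, substituting $b_j=\lambda a_j$, and using that $\underline b\mapsto\bigl|\sum_{n\le N}\mathrm e(b_1 n+\dots+b_d n^d)\bigr|^{2h}$ is $1$-periodic in each $b_j$ (since $n^j\in\mathbb Z$), so that the box $[0,\lambda M]^d$ is contained in a union of $(\lfloor\lambda M\rfloor+1)^d$ unit cubes, I would obtain, for every $N$ and every $1\le\ell<k$,
\begin{align*}
\int_{[0,M]^d}\Bigl|\sum_{n=1}^N\bigl(\tfrac{k}{\ell}\bigr)^{iP_{\underline a}(n)}\Bigr|^{2h}\mathrm d\underline a
&=\lambda^{-d}\int_{[0,\lambda M]^d}\Bigl|\sum_{n=1}^N\mathrm e\bigl(b_1 n+\dots+b_d n^d\bigr)\Bigr|^{2h}\mathrm d\underline b\\
&\le(2\pi)^d\Bigl(\frac{\lfloor\frac{M}{2\pi}\log\frac{k}{\ell}\rfloor+1}{\log\frac{k}{\ell}}\Bigr)^{d}J_{h,d}(N),
\end{align*}
and then Theorem~\ref{BDG}, whose two terms coincide at $h=d(d+1)/2$, gives $J_{h,d}(N)\ll_{d,\epsilon}N^{d(d+1)/2+\epsilon}$.

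Next, for fixed $N$ and fixed $1\le\ell<k\le Q$ with $Q:=(2dMN^d)^\mu$, Markov's inequality applied to the estimate above — the factor $(\cdots)^d$ cancelling, so the bound is uniform in $\ell,k$ — shows that the set of $\underline a\in[0,M]^d$ violating \eqref{comp} for this triple $(N,\ell,k)$, i.e.\ with
\[
\Bigl|\sum_{n=1}^N\bigl(\tfrac{k}{\ell}\bigr)^{iP_{\underline a}(n)}\Bigr|^{d(d+1)}>\Bigl(\frac{\lfloor\frac{M}{2\pi}\log\frac{k}{\ell}\rfloor+1}{\log\frac{k}{\ell}}\Bigr)^{d}N^{\frac{d(d+1)}{2}+1+2\mu d+3\epsilon},
\]
has Lebesgue measure $\ll_{d,\epsilon}N^{-(1+2\mu d+2\epsilon)}$. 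Since there are at most $Q^2=(2dM)^{2\mu}N^{2\mu d}$ admissible pairs $(\ell,k)$, the union over all of them of these bad sets has measure $\ll_{d,\epsilon}(2dM)^{2\mu}N^{-(1+2\epsilon)}$, which is summable in $N$; so by the Borel--Cantelli lemma there is a full-measure set $\mathcal F_M\subseteq[0,M]^d$ such that each $\underline a\in\mathcal F_M$ lies in only finitely many of these unions, i.e.\ there is $K_{\underline a}\in\mathbb N$ for which \eqref{comp} holds for all $N\ge K_{\underline a}$ and all $0\le\ell<k\le(2dMN^d)^\mu$. I would then set $\mathcal F(d,\mu,\epsilon):=[0,+\infty)^d\setminus\bigcup_{M\in\mathbb N}\bigl([0,M]^d\setminus\mathcal F_M\bigr)$, a set of full Lebesgue measure; for $\underline a\in\mathcal F(d,\mu,\epsilon)$ and any integer $M_{\underline a}\ge\max_j a_j$ one has $\underline a\in\mathcal F_{M_{\underline a}}$, and the lemma follows.

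The one genuinely non-routine step is the passage, in the first paragraph, from Theorem~\ref{BDG} — which lives on the unit torus — to a mean value bound over the fixed box $[0,M]^d$ that is uniform in $\ell$ and $k$: the rescaling $b_j=\lambda a_j$ together with periodicity is precisely what transfers it, and this mechanism produces exactly the lattice-count factor $\lfloor\frac{M}{2\pi}\log\frac{k}{\ell}\rfloor+1$ appearing in \eqref{comp}. Everything after that is bookkeeping, and the exponent $d(d+1)/2+1+2\mu d+3\epsilon$ is engineered for it: the ``$1+2\epsilon$'' keeps the measures summable in $N$, the ``$2\mu d$'' cancels the number $Q^2\asymp N^{2\mu d}$ of pairs $(\ell,k)$ one has to union over, and the ``$\epsilon$'' produced by Theorem~\ref{BDG} is absorbed. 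I would also dispose of the degenerate case $\ell=0$ as above and record the harmless measurability remark that each map $\underline a\mapsto\sum_{n\le N}(k/\ell)^{iP_{\underline a}(n)}$ is continuous, so that the bad sets are open.
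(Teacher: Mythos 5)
Your proposal is correct and follows essentially the same route as the paper: the same rescaling $b_j=\tfrac{1}{2\pi}\log(k/\ell)\,a_j$ plus periodicity to reduce the mean value over $[0,M]^d$ to $J_{h,d}(N)$ at the critical exponent $h=d(d+1)/2$, then Markov's inequality and a Borel--Cantelli union over $N$ and the $\ll N^{2\mu d}$ pairs $(\ell,k)$, with the exceptional sets assembled over $M$ exactly as in the paper's definition of $\mathcal{F}(d,\mu,\epsilon)$. The exponent bookkeeping and the treatment of the degenerate case $\ell=0$ are both fine.
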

\begin{proof}
For any positive integers $M,N,k $, any integer $0\leq\ell<k$ and for $h=d(d+1)/2$, Theorem \ref{BDG} yields  that
\begin{align*}
\int_{[0,M]^d}\left|\sum\limits_{n=1}^N\left(\dfrac{k}{\ell}\right)^{iP_{\underline{a}}(n)}\right|^{d(d+1)}\mathrm{d}{\underline{a}}
&=\left(\dfrac{2\pi}{\log\frac{k}{\ell}}\right)^d\int_{\left[0,\frac{M}{2\pi}\log\frac{k}{\ell}\right]^d}\left|\sum\limits_{n=1}^N\mathrm{e}\left(P_{\underline{a}}(n)\right)\right|^{d(d+1)}\mathrm{d}{\underline{a}}\\
&\leq\left(\dfrac{2\pi\left(\left\lfloor \frac{M}{2\pi}\log\frac{k}{\ell}\right\rfloor+1\right)}{\log \frac{k}{\ell}}\right)^dJ_{h,d}(N)\\
&\leq C(d,\epsilon)\left(\dfrac{\left\lfloor \frac{M}{2\pi}\log\frac{k}{\ell}\right\rfloor+1}{\log \frac{k}{\ell}}\right)^dN^{d(d+1)/2+\epsilon},
\end{align*}
where $C(d,\epsilon)>0$ is a constant.
Therefore, the set
$\mathcal{E}(d,\mu,\epsilon,M,N,k,\ell)$ of those ${\underline{a}}\in[0,M]^d$ satisfying
\begin{align*}
\left|\sum\limits_{n=1}^N\left(\dfrac{k}{\ell}\right)^{iP_{\underline{a}}(n)}\right|^{d(d+1)}
\geq C(d,\epsilon)\left(\dfrac{\left\lfloor \frac{M}{2\pi}\log\frac{k}{\ell}\right\rfloor+1}{\log \frac{k}{\ell}}\right)^dN^{d(d+1)/2+1+2\mu d+3\epsilon},
\end{align*}
has Lebesgue measure $\mathrm{m}\left(\mathcal{E}(d,\mu,\epsilon,M,N,k,\ell)\right)\leq N^{-(1+2\mu d+\epsilon)}$.
 Hence, the set
\begin{align*}
\mathcal{G}(d,\mu,\epsilon,M,K):=\bigcup\limits_{N= K}^{\infty}\bigcup\limits_{1\leq\ell<k\leq \left(2dMN^d\right)^{\mu}}\mathcal{E}(d,\mu,\epsilon,M,N,k,\ell)
\end{align*}
has Lebesgue measure $\mathrm{m}(\mathcal{G}(d,\mu,\epsilon,M,K))\ll_{d,\mu,\epsilon,M} K^{-\epsilon}$ for every positive integers $M$ and $K$, and, thus, the set
\begin{align}\label{setting}
\mathcal{F}(d,\mu,\epsilon):=[0,+\infty)^d\setminus\left(\bigcup\limits_{M=1}^\infty\bigcap\limits_{K=1}^\infty\mathcal{G}(d,\mu,\epsilon,M,K)\right),
\end{align}
 is of full Lebesgue measure.
\end{proof}

 It is clear from the previous lemma that as soon as we have estimates for
\begin{align*}
M_{h,d}(N):=\int\limits_0^1\left|\sum\limits_{n=1}^N\mathrm{e}\left(an^d\right)\right|^{2h}\mathrm{d}a
\end{align*}
analogous to the one of Theorem \ref{BDG}, we can obtain similar metric results for monomials of degree larger than 1.
 Observe that
$M_{d,h}(N)$ denotes the number of $(2h)$-tuples $(X_1,\dots,X_{2h})$ of positive integers not exceeding $N$ for which
\begin{align*}
X_1^d+\dots+X_h^d=X_{h+1}^d+\dots+X_{2h}^d.
\end{align*}
In that direction, Salberger and Wooley \cite{zbMATH05797868} proved the following theorem:

\begin{theorem}[Salberger and Wooley]
Suppose that $d$ and $h$ are positive integers with $d\geq 2h-1\geq3$. Let also $T_h(N)$ denote the number of $(2h)$-tuples $(X_1,\dots,X_{2h})$ of positive integers not exceeding $N$ for which the $h$-tuple $(X_1,\dots,X_{h})$ is  a permutation of $(X_{h+1},\dots,X_{2h})$. Then
\begin{align*}
M_{h,d}(N)-T_h(N)\ll_{h,\epsilon}N^{h+\lambda(d,h)+\epsilon},
\end{align*}
where
\begin{align}\label{Woo}
\lambda(d,h):=\left\{
\begin{array}{lll}-2+2/\sqrt{3}+\kappa(d,h-1,2h-2),&\text{if}\,\,\,\,d\geq 2h-1,\\
-1+\kappa(d,h-1,2h-2),&\text{if}\,\,\,d\geq (2h-1)^2,\\
-\dfrac{1}{2},&\text{if}\,\,\,d\geq(2h)^{4h},
\end{array}
\right.
\end{align}
and
\begin{align*}
\kappa(d,k,m):=\sum\limits_{r=k+1}^m(r+1)/\sqrt[r]{d},
\end{align*}
for any positive integers $d,k$ and $m$ with $k<m$.
\end{theorem}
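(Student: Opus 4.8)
The plan is to read the statement as a \emph{paucity} assertion. By definition $T_h(N)$ records exactly the ``diagonal'' tuples, for which $(X_1,\dots,X_h)$ is a permutation of $(X_{h+1},\dots,X_{2h})$, so $M_{h,d}(N)-T_h(N)$ is the number of integral points of the box $[1,N]^{2h}$ lying on the affine hypersurface $V\subset\mathbb{A}^{2h}$ defined by $X_1^d+\dots+X_h^d=X_{h+1}^d+\dots+X_{2h}^d$ that are \emph{not} of this diagonal type, and one must show there are $\ll_{h,\epsilon}N^{h+\lambda(d,h)+\epsilon}$ of them. The first step is to pass to projective space: $V$ is the affine cone over a geometrically irreducible hypersurface $\mathcal{V}\subset\mathbb{P}^{2h-1}$ of degree $d$ and dimension $2h-2$, and, summing over the scaling parameter (which costs only a factor $O(N^{\epsilon})$ once the height is pinned down), the count is controlled by the number of rational points of $\mathcal{V}$ of height $\ll N$ lying off the locus that supports the diagonal solutions.

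Secondly, I would identify that exceptional locus. For $d\ge3$ the linear subspaces contained in $\mathcal{V}$ are, up to permutations of coordinates, the spaces $\{X_i=\zeta_iX_{\pi(i)}\}$ with the $\zeta_i$ roots of unity and $\pi$ a matching of the ``$+$'' and ``$-$'' variables, and the only ones carrying infinitely many rational points with positive coordinates are the genuinely diagonal ones $\{X_i=X_{\pi(i)}\}$; likewise the other positive-dimensional subvarieties on which the relevant fibrations degenerate are few and of bounded degree. Hence every non-diagonal solution lies on $\mathcal{V}$ away from a fixed union of linear and low-degree subvarieties, and the points caught on those lower-dimensional pieces are absorbed into the error term.

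Thirdly, and this is the technical core, I would estimate the rational points of height $\ll N$ on $\mathcal{V}$ away from this locus by running the determinant-method circle of ideas of Heath-Brown, Browning and Salberger, together with Salberger's work towards the dimension-growth conjecture. The mechanism is a repeated fibration: one shows that the points of bounded height lie on $O(N^{\epsilon})$ auxiliary hypersurfaces of controlled degree, intersects with these to descend one dimension at a time, and at each stage strips off the degenerate fibres together with the lines and low-degree curves lying on them. The point counts on the geometrically irreducible varieties of dimension $r$ encountered along the way, $h\le r\le 2h-2$, contribute the successive savings recorded by the terms $(r+1)d^{-1/r}$ of $\kappa(d,h-1,2h-2)=\sum_{r=h}^{2h-2}(r+1)d^{-1/r}$, while the base case, a uniform estimate for the rational points of bounded height off the lines on a geometrically irreducible surface in $\mathbb{P}^{3}$, supplies the fixed exponent $-2+2/\sqrt{3}$; terminating the recursion earlier, which becomes legitimate once $d\ge(2h-1)^2$ or $d\ge(2h)^{4h}$, yields the alternative and cleaner exponents $-1+\kappa(d,h-1,2h-2)$ and $-1/2$ appearing in $(\ref{Woo})$. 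Combining these contributions with the $O(N^{\epsilon})$ losses from the number of auxiliary varieties and from the height summation produces the stated bound.

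The step I expect to be the obstacle is the dimension reduction itself: one has to control, uniformly in $d$, the geometry of the slices, namely the geometric irreducibility of the generic fibre, the degree of the locus where the fibration degenerates, and the fact that the lines and low-degree subvarieties removed at each level are themselves negligible or can be re-fed into the recursion. This is exactly where the full strength of Salberger's work on the dimension-growth conjecture is needed; by contrast the reduction of the paucity statement to a point-count and the final bookkeeping are comparatively routine, so in writing the argument I would invoke the relevant uniform point-counting estimates as black boxes and concentrate on the fibration and on verifying that the diagonal locus is precisely the exceptional set.
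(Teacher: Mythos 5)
This statement is not proved in the paper at all: it is quoted verbatim from Salberger and Wooley \cite{zbMATH05797868}, so there is no internal argument to compare yours against. Judged on its own terms, your sketch does reconstruct the architecture of the original proof correctly: reading $M_{h,d}(N)-T_h(N)$ as a paucity count for non-diagonal solutions, passing to the projective hypersurface $X_1^d+\dots+X_h^d=X_{h+1}^d+\dots+X_{2h}^d$ in $\mathbb{P}^{2h-1}$, excising the diagonal (and more generally the linear/low-degree) locus, and then applying the determinant-method point counts of Heath-Brown, Browning and Salberger together with Salberger's dimension-growth estimates, with the successive fibrations accounting for the terms $(r+1)d^{-1/r}$ in $\kappa(d,h-1,2h-2)$ and the surface base case for the constant $-2+2/\sqrt{3}$. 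That is indeed where the three regimes in \eqref{Woo} come from.

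However, as written your proposal is a proof plan rather than a proof: every quantitative ingredient --- the uniform (in the degree $d$) point counts on the irreducible slices, the control of the degenerate fibres, the base-case surface estimate --- is invoked as a black box, and those ingredients jointly \emph{are} the theorem. The one place where you make a substantive geometric claim yourself, namely that the linear subspaces of the hypersurface with many positive integral points are exactly the diagonal ones and that everything else removed along the way is negligible, is also the place where your sketch is least careful: for even $d$ the pairings $X_i=\zeta X_{\pi(i)}$ with $\zeta^d=1$ need the positivity constraint to be reduced to the diagonal, one must rule out larger linear spaces not of product-of-pairings type, and the non-linear low-degree subvarieties stripped off at each stage of the fibration must be shown not to carry too many points --- all of which requires the uniformity statements you have deferred. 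So the proposal is a faithful roadmap to the literature, but it cannot stand as a self-contained proof, and for the purposes of this paper the correct move is simply the one the author makes: cite \cite{zbMATH05797868}.
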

By definition $T_{h}(N)\sim h!N^h$. 
Therefore,
\begin{align*}
M_{h,d}(N)\ll_{h,\epsilon}N^{h+\max\lbrace0,\lambda(d,h)\rbrace+\epsilon},
\end{align*}
and we can prove a lemma similar to Lemma \ref{FLM}. We omit its proof.
\begin{lemma}\label{Wol}
Suppose that $d$ and $h$ are positive integers with $d\geq 2h-1\geq3$ and $\epsilon,\mu>0$. There is a set $\mathcal{F}_{\mathrm{\mathbf{mo}}}(d,\mu,\epsilon)\subseteq[0,+\infty)$ of full Lebesgue measure with elements satisfying the following property:

If $a\in\mathcal{F}_{\mathrm{\mathbf{mo}}}(d,\mu,\epsilon)$ is a real number bounded by an $M_a\in\mathbb{N}$, then there exists $K_a\in\mathbb{N}$ such that
\begin{align*}
\left|\sum\limits_{n=1}^N\left(\dfrac{k}{\ell}\right)^{ian^d}\right|^{2h}
&\ll_{h,\epsilon}\dfrac{\left\lfloor\frac{M_{a}}{2\pi}\log\frac{k}{\ell}\right\rfloor+1}{\log\frac{k}{\ell}}N^{h+\max\lbrace0,\lambda(d,h)\rbrace+1+2\mu d+3\epsilon}
\end{align*}
for every integer $N\geq K_{a}$ and any integers $0\leq\ell<k\leq \left(2dM_{a}N^d\right)^\mu$.
\end{lemma}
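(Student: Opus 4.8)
The plan is to follow the proof of Lemma~\ref{FLM} \emph{mutatis mutandis}, the single substantive change being that the Bourgain--Demeter--Guth bound for $J_{h,d}(N)$ is replaced by the estimate
\begin{align*}
M_{h,d}(N)\ll_{h,\epsilon}N^{h+\max\{0,\lambda(d,h)\}+\epsilon},
\end{align*}
recorded just before the statement, whose derivation from the Salberger--Wooley theorem together with $T_h(N)\sim h!\,N^h$ is exactly where the hypothesis $d\ge 2h-1\ge 3$ enters. Since the parameter space is now one-dimensional, the factor $(\lfloor\cdots\rfloor+1)/\log(k/\ell)$ appears to the first power rather than the $d$-th, and the Vinogradov exponent $d(d+1)/2$ is replaced throughout by $h+\max\{0,\lambda(d,h)\}$; nothing else changes.

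First I would perform the change of variables. For positive integers $M,N,k$ and an integer $1\le\ell<k$, writing $(k/\ell)^{ian^d}=\mathrm{e}\!\left(\frac{a}{2\pi}\left(\log\frac{k}{\ell}\right)n^d\right)$ and substituting $b=\frac{a}{2\pi}\log\frac{k}{\ell}$ gives
\begin{align*}
\int_0^M\left|\sum_{n=1}^N\left(\frac{k}{\ell}\right)^{ian^d}\right|^{2h}\mathrm{d}a
=\frac{2\pi}{\log\frac{k}{\ell}}\int_0^{\frac{M}{2\pi}\log\frac{k}{\ell}}\left|\sum_{n=1}^N\mathrm{e}\!\left(bn^d\right)\right|^{2h}\mathrm{d}b.
\end{align*}
Since $n^d\in\mathbb{Z}$, the integrand on the right is $1$-periodic in $b$, hence the last integral is at most $\left(\left\lfloor\frac{M}{2\pi}\log\frac{k}{\ell}\right\rfloor+1\right)M_{h,d}(N)$, and the mean value estimate above yields
\begin{align*}
\int_0^M\left|\sum_{n=1}^N\left(\frac{k}{\ell}\right)^{ian^d}\right|^{2h}\mathrm{d}a
\le C(h,\epsilon)\,\frac{\left\lfloor\frac{M}{2\pi}\log\frac{k}{\ell}\right\rfloor+1}{\log\frac{k}{\ell}}\,N^{h+\max\{0,\lambda(d,h)\}+\epsilon}
\end{align*}
for a suitable constant $C(h,\epsilon)>0$.

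Next I would reproduce the Chebyshev and Borel--Cantelli argument of Lemma~\ref{FLM}. Let $\mathcal{E}_{\mathrm{\mathbf{mo}}}(d,\mu,\epsilon,M,N,k,\ell)$ be the set of $a\in[0,M]$ for which the displayed $2h$-th power is at least $C(h,\epsilon)\bigl(\lfloor\frac{M}{2\pi}\log\frac{k}{\ell}\rfloor+1\bigr)\bigl(\log\frac{k}{\ell}\bigr)^{-1}N^{h+\max\{0,\lambda(d,h)\}+1+2\mu d+3\epsilon}$; by Markov's inequality $\mathrm{m}(\mathcal{E}_{\mathrm{\mathbf{mo}}})\le N^{-(1+2\mu d+\epsilon)}$. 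Since the number of pairs $(k,\ell)$ with $1\le\ell<k\le(2dMN^d)^\mu$ is $\ll_{d,\mu,M}N^{2\mu d}$, the set
\begin{align*}
\mathcal{G}_{\mathrm{\mathbf{mo}}}(d,\mu,\epsilon,M,K):=\bigcup_{N=K}^\infty\ \bigcup_{1\le\ell<k\le(2dMN^d)^\mu}\mathcal{E}_{\mathrm{\mathbf{mo}}}(d,\mu,\epsilon,M,N,k,\ell)
\end{align*}
satisfies $\mathrm{m}(\mathcal{G}_{\mathrm{\mathbf{mo}}}(d,\mu,\epsilon,M,K))\ll_{d,\mu,\epsilon,M}K^{-\epsilon}$, so $\bigcap_{K\ge1}\mathcal{G}_{\mathrm{\mathbf{mo}}}(d,\mu,\epsilon,M,K)$ is Lebesgue-null for every $M$. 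Consequently
\begin{align*}
\mathcal{F}_{\mathrm{\mathbf{mo}}}(d,\mu,\epsilon):=[0,+\infty)\setminus\left(\bigcup_{M=1}^\infty\bigcap_{K=1}^\infty\mathcal{G}_{\mathrm{\mathbf{mo}}}(d,\mu,\epsilon,M,K)\right)
\end{align*}
has full Lebesgue measure, and if $a\in\mathcal{F}_{\mathrm{\mathbf{mo}}}(d,\mu,\epsilon)$ is bounded by some $M_a\in\mathbb{N}$, then by construction there is $K_a\in\mathbb{N}$ with $a\notin\mathcal{G}_{\mathrm{\mathbf{mo}}}(d,\mu,\epsilon,M_a,K_a)$; unwinding the definitions gives precisely the asserted inequality for all $N\ge K_a$ and all $1\le\ell<k\le(2dM_aN^d)^\mu$ (the degenerate value $\ell=0$, if admitted at all, being vacuous or trivial under the obvious convention).

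I do not expect a genuine obstacle: the argument is formally identical to that of Lemma~\ref{FLM}, the only non-routine inputs being the Salberger--Wooley estimate — which is exactly why $d\ge 2h-1\ge 3$ is imposed — together with the $1$-periodicity of $b\mapsto\sum_{n}\mathrm{e}(bn^d)$ and the crude count of admissible pairs $(k,\ell)$, both inherited verbatim. The one point that warrants a second's attention is the exponent bookkeeping: one must check that, after inserting the cushion $1+2\mu d+3\epsilon$, replacing $d(d+1)/2$ by $h+\max\{0,\lambda(d,h)\}$ still leaves enough of a negative power of $N$ in $\mathrm{m}(\mathcal{E}_{\mathrm{\mathbf{mo}}})$ to remain summable over $N$ after multiplying by the $N^{2\mu d}$ pairs $(k,\ell)$, which is immediate for every admissible $\mu,\epsilon>0$.
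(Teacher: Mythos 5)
Your proposal is correct and is exactly the argument the paper intends (the paper explicitly omits the proof, noting it is the same as that of Lemma \ref{FLM} with the Salberger--Wooley bound for $M_{h,d}(N)$ in place of Theorem \ref{BDG}): the change of variables, the $1$-periodicity of $b\mapsto|\sum_n\mathrm{e}(bn^d)|^{2h}$, the Markov--Borel--Cantelli step, and the exponent bookkeeping all check out. No gaps.
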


\section{Proofs of Theorem \ref{Discrete} and Theorem \ref{Discrete_mo} }\label{proofs}
\begin{proof}[Proof of Theorem \ref{Discrete}]
Let ${\underline{a}}\in\mathcal{F}(d,\mu,\epsilon)$ be such that $|a_i|\leq M_{\underline{a}}$ for some $M_{\underline{a}}\in\mathbb{N}$ and any $i=1,\dots,d$. 
 The numbers $\mu$ and $\epsilon$ will be suitably chosen in the end of the proof.
 
If $K_{\underline{a}}\in\mathbb{N}$ is as in Lemma \ref{FLM}, then let $N_{0}\gg\max\left\{ K_{\underline{a}},t\right\}$ be sufficiently large such that 
\begin{align*}
1\leq P_{\underline{a}}(N)+t\leq 2dM_{\underline{a}}N^d
\end{align*}
for all $N\geq N_{0}$.
 Notice here that all polynomials we consider are strictly increasing functions in $[0,+\infty)$.
 Using the approximate functional equation for $\zeta(s)$ from Proposition \ref{AFE} and applying the Cauchy-Schwarz inequality we obtain for every  $N\geq N_0$ that
\begin{align}\label{discrete}
\begin{split}
\sum\limits_{n=1}^{N}\left|\zeta\left(s+iP_{\underline{a}}(n)\right)\right|^2
=&\,\sum\limits_{n=N_{0}}^{N}\left|\sum\limits_{k\leq \left(t+P_{\underline{a}}(n)\right)^{\mu}}\dfrac{1}{k^{\sigma+{i\left(t+P_{\underline{a}}(n)\right)}}}+O_{\mu,\delta}\left(\left(t+P_{\underline{a}}(n)\right)^{-\nu}\right)\right|^2+\\
&+O_{t,\underline{a}}(1)\\
=&\,S_N+O_{t,\underline{a}}\left(1+T_N+\left(S_NT_N\right)^{1/2}\right),
\end{split}
\end{align}
valid for $\mathbf{A}(\mu)+\delta\leq\sigma\leq1$,
where $0<\delta<1-\mathbf{A}(\mu)$ and
$\nu=\nu(\mu,\delta)>0$ are as in Proposition \ref{AFE},
\begin{align}\label{T_N}
T_N:=\sum\limits_{n=N_{0}}^{N}\left|O_{\mu,\delta}\left(\left(t+P_{\underline{a}}(n)\right)^{-\nu}\right)\right|^2
\ll_{\mu,\delta,t,{\underline{a}}}\sum\limits_{n=N_{0}}^{N}n^{-2d\nu}
\ll_{\mu,\delta,t,{\underline{a}}}1+N^{1-2d\nu}
\end{align}
and
\begin{align*}
S_N&:=\sum\limits_{n=N_{0}}^{N}\sum\limits_{1\leq\ell,k\leq \left(t+P_{\underline{a}}(n)\right)^\mu}\dfrac{1}{(k\ell)^\sigma}\left(\dfrac{k}{\ell}\right)^{i\left(t+P_{\underline{a}}(n)\right)}.
\end{align*}
Splitting $S_N$ into sum of diagonal $(k=\ell)$ and non-diagonal terms $(k\neq\ell)$ yields that
\begin{align}\label{S_N}
\begin{split}
S_N
\hspace*{-0.5pt}&=\sum\limits_{n=N_{0}}^{N}\hspace*{-2.1pt}\left[\zeta(2\sigma)\hspace*{-2.1pt}+\hspace*{-2.1pt}O\left(\left(t+P_{\underline{a}}(n)\right)^{\mu(1-2\sigma)}\right)\right]\hspace*{-2.1pt}+\hspace*{-2.1pt}\sum\limits_{n=N_{0}}^{N}\sum\limits_{1\leq\ell\neq k\leq \left(t+P_{\underline{a}}(n)\right)^\mu}\dfrac{1}{(k\ell)^\sigma}\left(\dfrac{k}{\ell}\right)^{i\left(t+P_{\underline{a}}(n)\right)}\\
&=(N-N_{0})\zeta(2\sigma)+O_{\mu,\delta,t,\underline{a}}\left(\sum\limits_{n=N_{0}}^{N}n^{d\mu(1-2\sigma)}\right)+R_N\\
&=N\zeta(2\sigma)+O_{\mu,\delta,t,\underline{a}}\left(1+N^{1+d\mu(1-2\sigma)}\right)+R_N,
\end{split}
\end{align}
with
\begin{align*}
R_N:=\sum\limits_{1\leq\ell\neq k\leq \left(t+P_{\underline{a}}(N)\right)^\mu}\dfrac{1}{(k\ell)^\sigma}\left(\dfrac{k}{\ell}\right)^{it}\sum\limits_{n\in\mathcal{A}}\left(\dfrac{k}{\ell}\right)^{iP_{\underline{a}}(n)}
\end{align*}
and
\begin{align*}
\mathcal{A}:=\left\{N_{0}\leq n\leq N:P_{\underline{a}}(n)+t\geq\max\left\{k^{1/\mu},\ell^{1/\mu}\right\}\right\}:=\left\{{N}_1,{N}_1+1,\dots,N\right\}.
\end{align*}
Observe that
\begin{align}\label{Restglied}
\begin{split}
R_N\ll\sum\limits_{1\leq\ell<k\leq \left(2dM_{\underline{a}}N^d\right)^\mu}\dfrac{1}{(k\ell)^\sigma}\left[\left|\sum\limits_{n=1}^N\left(\dfrac{k}{\ell}\right)^{iP_{\underline{a}}(n)}\right|+\left|\sum\limits_{n=1}^{{N}_1-1}\left(\dfrac{k}{\ell}\right)^{iP_{\underline{a}}(n)}\right|\right].
\end{split}
\end{align}
By our choice of the vector ${\underline{a}}$  and Lemma \ref{FLM}, it follows that
\begin{align*}
\left|\sum\limits_{n=1}^N\left(\dfrac{k}{\ell}\right)^{iP_{\underline{a}}(n)}\right|
\ll_\epsilon\left(\dfrac{\left\lfloor \frac{M_{\underline{a}}}{2\pi}\log\frac{k}{\ell}\right\rfloor+1}{\log \frac{k}{\ell}}\right)^{1/(d+1)}N^{1/2+2\mu/(d+1)+1/(d(d+1))+\epsilon}
\end{align*}
for every $N\geq K_{\underline{a}}$ and any $1\leq\ell<k\leq \left(2d M_{\underline{a}} N^d\right)^{\mu }$.
Implementing the latter bound to \eqref{Restglied}, we obtain
\begin{align}
R_N\ll_\epsilon N^{1-2\mu d \mathbf{B}(d,\mu)+\epsilon}\widetilde{R}_N,
\end{align}
where
\begin{align}\label{B}
\mathbf{B}(d,\mu):=\dfrac{1}{2\mu d}\left(\dfrac{1}{2}-\dfrac{2\mu}{d+1}-\dfrac{1}{d(d+1)}\right)
\end{align}
and
\begin{align}\label{R_N}
\begin{split}
\widetilde{R}_N&:=\left(\frac{M_{\underline{a}}}{2\pi}\right)^{1/(d+1)}\sum\limits_{1\leq\ell< k\leq \left(2dM_{\underline{a}}N^d\right)^\mu}\dfrac{1}{(k\ell)^\sigma}\left(\dfrac{\left\lfloor\frac{M_{\underline{a}}}{2\pi}\log\frac{k}{\ell}\right\rfloor+1}{\frac{M_{\underline{a}}}{2\pi}\log\frac{k}{\ell}}\right)^{1/(d+1)}\\
  &\ll_{d,\underline{a}} \mathop{\sum\limits_{1\leq\ell< k\leq\left(2dM_{\underline{a}}N^d\right)^\mu}}_{\log(k/\ell)>2\pi/M_{\underline{a}}}\dfrac{1}{(k\ell)^\sigma}+\mathop{\sum\limits_{1\leq\ell< k\leq \left(2dM_{\underline{a}}N^d\right)^\mu}}_{\log(k/\ell)\leq2\pi/M_{\underline{a}}}\dfrac{1}{(k\ell)^\sigma}\left(\log\dfrac{k}{\ell}\right)^{-1}\\  
  &\ll_{d,\mu,\delta,\underline{a}} N^{(2-2\sigma)\mu d}(\log N)^2
  \end{split}
\end{align}
for every $N\geq N_{0}$. 
The square on the logarithm is justified when $\sigma=1$.

Gathering up the terms and estimates from \eqref{discrete}-\eqref{R_N}, we deduce that
\begin{align}\label{discrete1}
\begin{split}
\dfrac{1}{N}\sum\limits_{n=1}^{N}\left|\zeta\left(s+iP_{\underline{a}}(n)\right)\right|^2-\zeta(2\sigma)
\ll N^{-1/2}+N^{-d\nu}+N^{d\mu(1-2\sigma)}+N^{-2\mu d(\mathbf{B}(d,\mu)+\sigma-1)+\epsilon}
\end{split}
\end{align}
for every $N\geq N_{0}$, where the implicit constant depends on $d,\mu,\delta,t,{\underline{a}}$ and $\epsilon$.

Now we need to determine $\sigma\leq1$ and $\mu,\epsilon>0$ so that the right-hand side of \eqref{discrete1} is $o(1)$ as $N$ tend to infnity. The first three terms
$$N^{-1/2},\,\,\,N^{-d\nu},\,\,\,N^{d\mu\left(1-2\sigma\right)}$$
are $o(1)$ for any $1/2<\sigma\leq1$ and $\mu,\epsilon>0$. 
The last term
\begin{align}\label{lastt}N^{-2\mu d(\mathbf{B}(d,\mu)+\sigma-1)+\epsilon}
\end{align}
can be $o(1)$ for $\sigma\leq 1$ and $\mu>0$ only if $ \mathbf{B}(d,\mu)>0$, or equivalently from \eqref{B}, when
$$0<\mu<\dfrac{d^2+d-2}{4d}.$$

 It remains to choose $\mu_0=\mu_0(d)$ in such a way that we can obtain the wider strip possible containing the vertical line $1+i\mathbb{R}$, bound to the method we have used. 
 In view of \eqref{A}, \eqref{B} and the preceding discussion, the abscissa of the left boundary of the wider strip is given by
 \begin{align*}
  \mathbf{S}(d):=\inf\left\{\max\left\{\mathbf{A}(\mu),1-\mathbf{B}(d,\mu)\right\}:0<\mu<\dfrac{d^2+d-2}{4d}\right\}
 \end{align*}
 and the desired $\mu_0$ is the one for which the aforementioned well-defined infimum is obtained. 
 Observe then that for any $\delta>0$ such that
 $$\delta<1-\mathbf{S}(d)<1-A(\mu_0)$$
 and any $\mathbf{S}(d)+\delta\leq\sigma\leq1$, the term \eqref{lastt} is bounded above by 
 $$N^{-2\mu_0 d\delta+\epsilon}$$
 Now the theorem follows from \eqref{discrete1} and for $\epsilon=\mu_0 d\delta$.
\end{proof}

\begin{proof}[Proof of Theorem \ref{Discrete_mo}]
Now in order to obtain a theorem for the monomials $ax^d$, similar to Theorem \ref{Discrete}, we need to choose $2\leq h\leq(d+1)/2$ and $\mu>0$ in such way that the number
\begin{align}\label{B_mo}
\mathbf{B}_{\mathrm{\mathbf{mo}}}(d,h,\mu):=\dfrac{1}{2\mu d}\left(\dfrac{1}{2}-\dfrac{\max\lbrace0,\lambda(d,h)\rbrace+1+2\mu d}{2h}\right)
\end{align}
is positive.
The reasoning of how we obtain the quantity $\mathbf{B}_{\mathrm{\mathbf{mo}}}(d,h,\mu)$ is the same as  in \eqref{Restglied}-\eqref{B} of the previous proof, where this time we employ Lemma \ref{Wol} instead of Lemma \ref{FLM}.
From definition \eqref{Woo} of $\lambda(d,h)$  we know that for any $d\geq3$ and $h=2$
\begin{align*}
\dfrac{1}{2}-\dfrac{\max\lbrace0,\lambda(d,2)\rbrace+1}{4}>0.
\end{align*}
Hence, there is $2\leq h_{\mathrm{\mathbf{mo}}}\leq(d+1)/2$ such that 
\begin{align*}
e_{\mathrm{\mathbf{mo}}}:&=\dfrac{1}{2}-\dfrac{\max\lbrace0,\lambda(d,h_{\mathrm{\mathbf{mo}}})\rbrace+1}{2h_{\mathrm{\mathbf{mo}}}}\\&=\max\left\{\dfrac{1}{2}-\dfrac{\max\left\{0,\lambda(d,h)\right\}+1}{2h}:2\leq h\leq \dfrac{d+1}{2}\right\}\\&>0.
\end{align*}
If we set now 
 \begin{align}\label{S_mo}
  \mathbf{S}_{\mathrm{\mathbf{mo}}}(d):=\inf\left\{\max\left\{\mathbf{A}(\mu),1-\mathbf{B}_{\mathrm{\mathbf{mo}}}(d,h_{\mathrm{\mathbf{mo}}},\mu)\right\}:0<\mu<\dfrac{e_{\mathrm{\mathbf{mo}}}h_{\mathrm{\mathbf{mo}}}}{d}\right\},
 \end{align}
then the theorem follows in the same way as Theorem \ref{Discrete}.
 \end{proof}
 
Notice that in the statement of Theorem \ref{Discrete_mo} we also consider the case $d=2$ which can not be derived from Lemma \ref{Wol}. 
Instead, our starting point is a classical estimate due to Hua \cite{zbMATH02515008}:
 \begin{align*}
\int\limits_0^1\left|\sum\limits_{n=1}^N\mathrm{e}\left(an^2\right)\right|^{4}\mathrm{d}a\ll_\epsilon N^{2+\epsilon},
 \end{align*}
We can then argue as before to compute the corresponding quantity ${\bf S
}_{\mathrm{\mathbf{mo}}}(2)$.

\section{The Lindel\"of Hypothesis}\label{Lindel}
 As we have already seen in the previous sections, to control the length of the Dirichlet polynomials appearing in the approximate functional equation of Lemma \ref{AFE} and, in accordance, in the proofs of Theorem \ref{Discrete} and Theorem \ref{Discrete_mo}, we had to narrow the strip inside $\left\{s\in\mathbb{C}:1/2<\sigma\leq1\right\}$ for which we can prove discrete second moments.
Indeed, one could argue that there is some $\sigma_1$ with $1/2<\sigma_1<1$ such that $\mathbf{S}(d_k)\leq\sigma_1$ for infinitely many integers $d_k\geq2$, $k\in\mathbb{N}$. 
But that would require at first
\begin{align*}
\max\left\{\mathbf{A}(\mu_k),1-\mathbf{B}(d_k,\mu_k)\right\}\leq\sigma_1
\end{align*}
for all $k\in\mathbb{N}$ and some $0<\mu_k<(d_k^2+d_k-2)/(4d_k)$, or, equivalently,
\begin{align*}
\mathbf{A}(\mu_k)\leq\sigma_1\,\,\,\text{ and }\,\,\,\mu_k\leq\dfrac{d_k^2+d_k-2}{4d_k^2\left((1-\sigma_1)(d_k+1)+1\right)}
\end{align*}
for all $k\in\mathbb{N}$.
Then we would have $\lim\limits_{k\to\infty}\mu_k=0$ and, by the definition \eqref{A} of $\mathbf{A}(\mu)$, that $1=\lim\limits_{k\to\infty}\mathbf{A}(\mu_k)\leq\sigma_1$, which contradicts our assumption about $\sigma_1$.
Hence, $\mathbf{S}(d)$ tends to 1 from the left as $d$ tends to infinity. 
The same holds true for $\mathbf{S}_{\mathrm{\mathbf{mo}}}(d)$.

 The definitions \eqref{A} and \eqref{S_N} already give us a hint of how we could obtain that $ \mathbf{S}(d)=\mathbf{S}_{\mathrm{\mathbf{mo}}}(d)=1/2$, for any $d\geq2$.
It would suffice if we were able to take $\mathbf{A}(\mu)=1/2$ for any $0<\mu<1$, since $\lim\limits_{\mu\to0^+}\mathbf{B}(d,\mu)=\lim\limits_{\mu\to0^+}\mathbf{B}_{\mathrm{\mathbf{mo}}}(d,h_{\mathrm{\mathbf{mo}}},\mu)=+\infty$. 
This is where the {\it Lindel\" of hypothesis} for $\zeta(s)$  can be used:
\begin{align*}
\zeta\left(\dfrac{1}{2}+it\right)\ll_\epsilon|t|^\epsilon,\,\,\,|t|\geq t_0>0.
\end{align*}
In our case a classical result for $\zeta(s)$ (see \cite[Theorem 13.3]{titchmarsh1951theory}) was the inspiration for our Lemma \ref{AFE}, where the number $\mathbf{A}(\mu)$ first appears:

{\it The Lindel\" of hypothesis is true if and only if for any integer $k\geq1$, any $\sigma>1/2$ and any $0<\mu<1$, there is a positive number $\nu=\nu(k,\sigma,\mu)$ such that
\begin{align*}
\zeta^k(s)=\sum\limits_{n\leq t^\mu}\dfrac{d_k(n)}{n^s}+O(t^{-\nu}),\,\,\,t\geq t_0>0,
\end{align*}
where \begin{align*}
d_k(n)=\mathop{\sum\limits_{m_1,\dots,m_k\in\mathbb{N}}}_{m_1\cdots m_k=n}1,\,\,\,n\in\mathbb{N}.
\end{align*}}
It is then straightforward to obtain the following conditional results:

\begin{lemma}If the Lindel\" of hypothesis is true, the approximate functional equation of Lemma \ref{AFE} hold true with $\mathbf{A}(\mu)=1/2$ for any $0<\mu<1$.
\end{lemma}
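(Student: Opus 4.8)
The plan is to re-run the proof of Proposition \ref{AFE} verbatim, but replace every invocation of the Vinogradov--Korobov--Ford bound (Lemma \ref{apprx}) and of the subconvexity-type input (Lemma \ref{sapprox}) by the hypothetical Lindel\"of bound $\zeta(1/2+it)\ll_\epsilon|t|^\epsilon$, and then track how the exponents in the error terms change. Concretely, for $0<\mu<1$ I would show directly that one may take $x=t^\mu$ in the Hardy--Littlewood approximate functional equation
\begin{align*}
\zeta(s)=\sum_{n\leq x}\frac{1}{n^s}+\chi(s)\sum_{n\leq y}\frac{1}{n^{1-s}}+O\left(x^{-\sigma}+t^{1/2-\sigma}y^{\sigma-1}\right),\qquad 2\pi xy=t,
\end{align*}
and verify that for $1/2<\sigma\leq1$ the right-hand error plus the contribution of the second Dirichlet polynomial is $O(t^{-\nu})$ for some $\nu>0$; here the key point is that, under Lindel\"of, the functional-equation symmetry together with the convexity bound can be replaced by the much stronger Lindel\"of bound on the critical line, which propagates by the Phragm\'en--Lindel\"of principle to the near-critical strip $1/2<\sigma\leq1$ as $\zeta(\sigma+it)\ll_\epsilon|t|^\epsilon$. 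Since the implicit statement in the excerpt already records the iff-characterization of Lindel\"of in terms of the truncated Dirichlet series $\zeta^k(s)=\sum_{n\leq t^\mu}d_k(n)n^{-s}+O(t^{-\nu})$ for every $k\geq1$, the cleanest route is simply to invoke this characterization with $k=1$: it immediately gives, for every $\sigma>1/2$ and every $0<\mu<1$, a $\nu=\nu(\sigma,\mu)>0$ with $\zeta(s)=\sum_{n\leq t^\mu}n^{-s}+O(t^{-\nu})$, uniformly on any compact $\sigma$-interval in $(1/2,1]$.

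I would then organize the argument in three short steps. First, recall from the excerpt that the function $\mathbf{A}(\mu)$ was defined precisely so that Proposition \ref{AFE} holds with lower $\sigma$-threshold $\mathbf{A}(\mu)+\delta$; its value for $0<\mu<1$ came only from the losses in Lemma \ref{sapprox} and Lemma \ref{apprx}, i.e.\ from the use of convexity/Vinogradov--Korobov in the Perron-contour and Hardy--Littlewood arguments. Second, observe that under Lindel\"of those losses vanish: in the $\mu_0<\mu<1$ range the Hardy--Littlewood error terms $t^{1/2-\sigma\mu}+t^{-1/2-\sigma\mu+\mu}$ need not be re-examined because the stronger iff-statement already supplies the conclusion for all $0<\mu<1$ at once; in the $0<\mu\leq\mu_0$ range, the contour-shift estimates \eqref{Lin2} and \eqref{Lin3} improve because $\zeta(\sigma+it)\ll_\epsilon t^\epsilon$ uniformly for $\sigma\geq1/2-\eta$, so the rectangle $\mathcal{R}$ may be pushed with left edge at real part $1/2-\sigma-\kappa$ for arbitrarily small $\kappa>0$ and the resulting bound is $t^{\epsilon}x^{1/2-\sigma}+ (\text{negligible})$, which is $O(t^{-\nu})$ once $x=t^\mu$ with $\mu<1$ and $\sigma>1/2$. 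Third, conclude that the whole construction of $\mathbf{A}(\mu)$ collapses to the value $1/2$ uniformly in $0<\mu<1$, so Lemma \ref{AFE} (= Proposition \ref{AFE}) holds with $\mathbf{A}(\mu)=1/2$; I would state the resulting $\nu$ as depending on $\mu$, $\delta$, and $\epsilon$, exactly as in the unconditional version.

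The main obstacle, such as it is, is bookkeeping rather than conceptual: one must make sure that the Lindel\"of bound is applied in the correct half-plane and that the Phragm\'en--Lindel\"of / functional-equation step giving $\zeta(\sigma+it)\ll_\epsilon|t|^\epsilon$ for $1/2\leq\sigma\leq1$ is invoked uniformly, so that all the $O$-constants in \eqref{Perron2}--\eqref{Lin3} can genuinely be taken independent of $\sigma$ on the relevant interval; and one must check that the exponents $\mu-1$, $\mu(1-\sigma)-1$, etc., arising after setting $m=\lfloor t^\mu\rfloor$ remain negative for every $0<\mu<1$ and every $\sigma>1/2$, which they plainly do once the bad term $t^{\kappa(-2\mu+3^{3/2}\kappa^{1/2}\eta)-\mu\delta}(\log t)^2$ is replaced by $t^{\mu(1/2-\sigma)-\text{(something positive)}}$. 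Because the excerpt already isolates the relevant iff-statement from \cite[Theorem 13.3]{titchmarsh1951theory}, I expect the proof to reduce to one or two lines: quote that theorem with $k=1$, note that it yields the asserted truncated series for every $0<\mu<1$ and every $\sigma>1/2$ with a positive power-saving error, and read off that $\mathbf{A}(\mu)$ may be taken to be $1/2$ throughout.
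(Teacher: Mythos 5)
Your proposal is correct and, despite the lengthy preamble about re-running the Perron/Hardy--Littlewood argument with Lindel\"of bounds, it ultimately settles on exactly the route the paper intends: quote the Titchmarsh equivalence (\cite[Theorem 13.3]{titchmarsh1951theory}) with $k=1$, which immediately yields $\zeta(s)=\sum_{n\leq t^\mu}n^{-s}+O(t^{-\nu})$ for every $\sigma>1/2$ and $0<\mu<1$, i.e.\ Proposition \ref{AFE} with $\mathbf{A}(\mu)=1/2$. The paper gives no further argument beyond this observation, so your one-to-two-line conclusion matches it.
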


\begin{theorem}
If the Lindel\" of hypothesis is true, Theorem \ref{Discrete} and Theorem \ref{Discrete_mo} hold true with $\mathbf{S}(d)=\mathbf{S}_{\mathrm{\mathbf{mo}}}(d)=1/2$, for any integer $d\geq2$.
\end{theorem}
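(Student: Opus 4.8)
The plan is to rerun the proofs of Theorem~\ref{Discrete} and Theorem~\ref{Discrete_mo} essentially verbatim, the only modification being that we feed into them the sharper approximate functional equation furnished by the preceding lemma. Under the Lindel\"of hypothesis that lemma permits us to replace $\mathbf{A}(\mu)$ by $1/2$ throughout Proposition~\ref{AFE}: for every $0<\mu<1$ and every $0<\delta<1/2$ there is $\nu=\nu(\mu,\delta)>0$ with
\begin{align*}
\zeta(s)=\sum_{0\le n\le t^{\mu}}\frac{1}{n^{s}}+O_{\mu,\delta}\!\left(t^{-\nu}\right),\qquad t\ge t_0>1,
\end{align*}
uniformly in $1/2+\delta\le\sigma\le1$.

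Fix $d\ge2$ and $\sigma\in(1/2,1]$; the parameters $\mu\in(0,1)$ and $\epsilon>0$ will be chosen at the end. Take $\underline a\in\mathcal{F}(d,\mu,\epsilon)$, the full-measure set of Lemma~\ref{FLM}, with entries bounded by some $M_{\underline a}\in\mathbb{N}$. The decomposition \eqref{discrete}--\eqref{R_N} in the proof of Theorem~\ref{Discrete} goes through unchanged, since nothing in that chain of estimates uses any property of $\mathbf{A}(\mu)$ beyond the mere validity of the approximate functional equation for an admissible pair $(\mu,\delta)$; one thus arrives at the analogue of \eqref{discrete1},
\begin{align*}
\frac1N\sum_{n=1}^{N}\bigl|\zeta\bigl(s+iP_{\underline a}(n)\bigr)\bigr|^{2}-\zeta(2\sigma)\ll N^{-1/2}+N^{-d\nu}+N^{d\mu(1-2\sigma)}+N^{-2\mu d(\mathbf{B}(d,\mu)+\sigma-1)+\epsilon},
\end{align*}
with an implicit constant depending on $d,\mu,\delta,t,\underline a,\epsilon$. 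The first three terms are $o(1)$ for any $\mu>0$ as soon as $\sigma>1/2$. For the fourth, one reads off from \eqref{B} that
\begin{align*}
\mathbf{B}(d,\mu)=\frac{1}{2\mu d}\left(\frac12-\frac{2\mu}{d+1}-\frac{1}{d(d+1)}\right)\longrightarrow+\infty\qquad(\mu\to0^{+}),
\end{align*}
so we may pick $\mu=\mu(d,\sigma)\in(0,1)$ small enough that $\mathbf{B}(d,\mu)+\sigma-1>0$, then $\delta<\sigma-1/2$ (so that the approximate functional equation above is available) and finally $\epsilon<\mu d\bigl(\mathbf{B}(d,\mu)+\sigma-1\bigr)$. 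With these choices the right-hand side is $o(1)$, and the limit equals $\zeta(2\sigma)$ for every $\underline a$ in the full-measure set $\mathcal{F}(d,\mu,\epsilon)$. As $\sigma>1/2$ was arbitrary, this proves $\mathbf{S}(d)=1/2$.

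For Theorem~\ref{Discrete_mo} the argument is the same, with Lemma~\ref{Wol} (when $d\ge 2h-1\ge3$) or Hua's fourth-moment estimate recalled above (when $d=2$) replacing Lemma~\ref{FLM}. One obtains the corresponding analogue of \eqref{discrete1} with $\mathbf{B}_{\mathrm{\mathbf{mo}}}(d,h_{\mathrm{\mathbf{mo}}},\mu)$ from \eqref{B_mo} in place of $\mathbf{B}(d,\mu)$, and since $e_{\mathrm{\mathbf{mo}}}>0$ we again have $\mathbf{B}_{\mathrm{\mathbf{mo}}}(d,h_{\mathrm{\mathbf{mo}}},\mu)\to+\infty$ as $\mu\to0^{+}$. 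Choosing $\mu$ small in terms of $d$ and $\sigma$, and $\delta,\epsilon$ as before, gives the limit $\zeta(2\sigma)$ for almost every $a\in[0,+\infty)$ and every $\sigma>1/2$, i.e.\ $\mathbf{S}_{\mathrm{\mathbf{mo}}}(d)=1/2$.

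There is essentially no obstacle beyond bookkeeping: the entire content of the conditional statement is that, once the length restriction responsible for $\mathbf{A}(\mu)\to1$ is removed, one may let $\mu\to0^{+}$, and in that regime the Dirichlet polynomials in the approximate functional equation become short enough that the off-diagonal Weyl-sum contribution is negligible on a full-measure set for \emph{every} $\sigma>1/2$. The only point deserving care is the order of quantifiers: $\mu$ --- and with it the full-measure set $\mathcal{F}(d,\mu,\epsilon)$, resp.\ $\mathcal{F}_{\mathrm{\mathbf{mo}}}(d,\mu,\epsilon)$ --- must be chosen after $\sigma$ has been fixed, and one checks that this choice can be made inside $(0,1)$, which is immediate since $\mathbf{B}(d,\mu)$ and $\mathbf{B}_{\mathrm{\mathbf{mo}}}$ already diverge as $\mu\downarrow0$ well within that interval.
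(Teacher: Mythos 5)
Your proposal is correct and follows exactly the route the paper intends: the paper leaves this theorem as ``straightforward,'' having already observed in Section~\ref{Lindel} that under Lindel\"of one may take $\mathbf{A}(\mu)=1/2$ for all $0<\mu<1$ while $\mathbf{B}(d,\mu)$ and $\mathbf{B}_{\mathrm{\mathbf{mo}}}(d,h_{\mathrm{\mathbf{mo}}},\mu)$ diverge as $\mu\to0^{+}$, which is precisely the mechanism you exploit. Your additional care about choosing $\mu,\delta,\epsilon$ after $\sigma$ (consistent with the quantifier order in Theorems~\ref{Discrete} and~\ref{Discrete_mo}) is a legitimate and correct filling-in of the details the paper omits.
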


\section{On the plane of absolute convergence}\label{absolu}
Since  the Dirichlet series representation of $\zeta(s)$ is absolutely convergent in the half-plane $\sigma>1$, computing its discrete second moments with respect to $P_{\underline{a}}(x)$ and $ax^d$ is much simpler if one is willing to omit a negligible set $[0,+\infty)^d\setminus\mathcal{L}(d)$, where
\begin{align*}
\mathcal{L}(d):=\bigcap\limits_{\underline{r}\in\mathbb{Q}^d}\mathop{\bigcap\limits_{k,\ell=0}^{\infty}}_{k\neq \ell}\left\{\underline{a}\in[0,+\infty)^d:a_i\log\dfrac{k}{\ell}\neq2\pi r_i,\,1\leq i\leq d\right\}.
\end{align*}
In the succeeding proof we will employ a useful property of polynomials with at least one irrational coefficient, namely that the sequence of their values on the integers is uniformly distributed modulo 1:
\begin{lemma}\label{help}
Let $P_{\underline{\alpha}}(x)=\alpha_1x+\dots+\alpha_dx^d$ be a polynomial of degree $d\geq1$ with real coefficients. 
If $\alpha_j$ is irrational for some $j=1,\dots,d$, then 
\begin{align*}
\lim\limits_{N\to\infty}\dfrac{1}{N}\sum\limits_{n=1}^N\mathrm{e}\left(P_{\underline{a}}(n)\right)=0.
\end{align*}
\end{lemma}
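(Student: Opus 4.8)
The plan is to deduce the lemma from the classical fact, due to Weyl, that the sequence $\bigl(P_{\underline{\alpha}}(n)\bigr)_{n\geq1}$ is uniformly distributed modulo $1$ whenever $P_{\underline{\alpha}}$ has at least one irrational coefficient among those of its positive-degree terms. Indeed, the assertion of the lemma is precisely the $k=1$ instance of Weyl's criterion applied to this sequence, so it suffices to establish the equidistribution. I would prove the latter by strong induction on the degree $d$, using two standard tools: \emph{Weyl's criterion}, that a real sequence $(x_n)_{n\geq1}$ is uniformly distributed modulo $1$ if and only if $\frac1N\sum_{n=1}^N\mathrm{e}(kx_n)\to0$ for every nonzero integer $k$; and \emph{van der Corput's difference theorem}, that $(x_n)_{n\geq1}$ is uniformly distributed modulo $1$ whenever $\bigl(x_{n+h}-x_n\bigr)_{n\geq1}$ is uniformly distributed modulo $1$ for every integer $h\geq1$ (see \cite[Chapter~1]{kuipers2012uniform} for both). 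Throughout I may harmlessly allow the polynomials to carry a constant term, since shifting $P_{\underline\alpha}$ by a constant $c$ only multiplies $\sum_{n=1}^N\mathrm{e}(P_{\underline\alpha}(n))$ by the unimodular factor $\mathrm{e}(c)$ and does not affect equidistribution.

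The first step is to reduce to the case where the leading coefficient $\alpha_d$ is irrational. Let $j$ be the largest index with $\alpha_j\notin\mathbb{Q}$, which exists by hypothesis; if $j=d$ there is nothing to do, so assume $j<d$ and write $P_{\underline{\alpha}}=Q+R$, where $Q(x)=\alpha_1x+\dots+\alpha_jx^j$ and $R(x)=\alpha_{j+1}x^{j+1}+\dots+\alpha_dx^d$ has rational coefficients with common denominator $q$. Since $R(n+q)-R(n)\in\mathbb{Z}$ for all $n$, the map $n\mapsto\mathrm{e}(R(n))$ is periodic of period $q$; splitting $\sum_{n\leq N}\mathrm{e}(P_{\underline{\alpha}}(n))$ according to the residue of $n$ modulo $q$ therefore reduces the claim, up to a rescaling and $O(1)$ boundary terms, to showing that $\frac1M\sum_{m\leq M}\mathrm{e}(Q(qm+r))\to0$ as $M\to\infty$ for each fixed residue $r$ modulo $q$. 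Here $m\mapsto Q(qm+r)$ is a polynomial of degree $j<d$ with leading coefficient $\alpha_jq^j\notin\mathbb{Q}$, so this follows from the induction hypothesis. Hence from now on I may assume $\alpha_d\notin\mathbb{Q}$.

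For the base case $d=1$ the coefficient $\alpha_1$ is irrational, and for each nonzero integer $k$ the sum $\sum_{n=1}^N\mathrm{e}(k\alpha_1 n)$ is a geometric series bounded in modulus by $|\mathrm{e}(k\alpha_1)-1|^{-1}$, which is finite since $\mathrm{e}(k\alpha_1)\neq1$; hence $\frac1N\sum_{n=1}^N\mathrm{e}(k\alpha_1 n)\to0$ and $(\alpha_1 n)_{n}$ is uniformly distributed modulo $1$ by Weyl's criterion. For the inductive step with $d\geq2$ and $\alpha_d\notin\mathbb{Q}$, fix an integer $h\geq1$ and set $Q_h(x):=P_{\underline{\alpha}}(x+h)-P_{\underline{\alpha}}(x)$; this is a polynomial of degree $d-1$ with leading coefficient $dh\,\alpha_d$, which is irrational because $dh$ is a nonzero integer. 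By the induction hypothesis the sequence $\bigl(Q_h(n)\bigr)_{n\geq1}$ is uniformly distributed modulo $1$ for every $h\geq1$, and van der Corput's difference theorem then yields that $\bigl(P_{\underline{\alpha}}(n)\bigr)_{n\geq1}$ is uniformly distributed modulo $1$, completing the induction. Applying Weyl's criterion with $k=1$ gives the lemma.

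I do not anticipate a serious obstacle: this is the classical Weyl equidistribution theorem for polynomial sequences, and one could simply invoke it from \cite{kuipers2012uniform}. The two points genuinely requiring care are the reduction from \emph{some} coefficient being irrational to the \emph{leading} coefficient being irrational, which is handled by the periodicity argument above, and the fact that the inductive bounds for the differenced polynomials $Q_h$ are not uniform in $h$. The latter is exactly the difficulty that van der Corput's difference theorem is built to circumvent — through the weighted van der Corput inequality underlying its proof, where one fixes the number of shifts before letting $N\to\infty$ — so using it as a black box keeps the argument clean.
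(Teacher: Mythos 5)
Your proof is correct and is precisely the classical argument (Weyl's criterion plus van der Corput differencing, with the periodicity reduction to an irrational leading coefficient) behind the results the paper simply cites from \cite{kuipers2012uniform}; the paper's own ``proof'' is nothing more than that citation. So you have supplied, correctly, the standard proof of exactly the theorem being invoked.
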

\begin{proof}
For a proof see \cite[Chapter 1, Theorem 3.2]{kuipers2012uniform} and \cite[Chapter 1, Theorem 2.1]{kuipers2012uniform}.
\end{proof}

Lastly, we will also partially treat the case when $\underline{a}\notin\mathcal{L}(d)$. 
For this purpose, we need to define here for any positive integers $k_0$ and $\ell_0$ the set
\begin{align*}
U(k_0,\ell_0):=\left\{\left(k,\ell\right)\in\mathbb{N}^2:\dfrac{k}{\ell}=\left(\dfrac{k_0}{\ell_0}\right)^{u_{(k,\ell)}}\,\,\,\text{ for some }\,\,\,u_{(k,\ell)}\in\mathbb{Z}\right\}.
\end{align*}
We are now ready to prove 
\begin{theorem}\label{s>1}
Let $d\geq1$ be an integer and $\sigma>1$.
If $\underline{a}\in\mathcal{L}(d)$, then
\begin{align}\label{indep}
\lim\limits_{N\to\infty}\dfrac{1}{N}\sum\limits_{n=1}^N\left|\zeta\left(s+iP_{\underline{a}}(n)\right)\right|^2=\zeta(2\sigma).
\end{align}
Let also $(m_1,\dots,m_d)\in\mathbb{Z}^d\setminus\left\{\underline{0}\right\}$ and $k_0\neq\ell_0$ be coprime positive integers such that $(k_0/\ell_0)^{1/q}$ is irrational for every $q\in\mathbb{N}\setminus\lbrace1\rbrace$.
If
\begin{align}\label{reson}
\underline{a}:=\left(2\pi m_1\left(\log\frac{k_0}{\ell_0}\right)^{-1},\dots,2\pi m_d\left(\log\frac{k_0}{\ell_0}\right)^{-1}\right),
\end{align}
then
\begin{align}\label{dep}
\lim\limits_{N\to\infty}\dfrac{1}{N}\sum\limits_{n=1}^N\left|\zeta\left(s+iP_{\underline{a}}(n)\right)\right|^2=\zeta(2\sigma)+2\mathop{\sum\limits_{(k,\ell)\in U(k_0,\ell_0)}}_{k<\ell}\dfrac{\cos\left(t\log\frac{k}{\ell}\right)}{(k\ell)^\sigma}.
\end{align}
\end{theorem}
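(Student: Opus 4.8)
The plan is to exploit the absolute convergence of the Dirichlet series in the half-plane $\sigma>1$ to expand $|\zeta(s+iP_{\underline{a}}(n))|^2$ into a double Dirichlet series, interchange the order of summation (justified by absolute convergence, uniformly in $n$ since $\sigma>1$ is fixed), and then average term-by-term over $n$. Concretely, for each $n$ we write
\begin{align*}
\left|\zeta\left(s+iP_{\underline{a}}(n)\right)\right|^2=\sum_{k,\ell=1}^{\infty}\frac{1}{(k\ell)^\sigma}\left(\frac{\ell}{k}\right)^{i\left(t+P_{\underline{a}}(n)\right)},
\end{align*}
so that
\begin{align*}
\frac{1}{N}\sum_{n=1}^{N}\left|\zeta\left(s+iP_{\underline{a}}(n)\right)\right|^2=\sum_{k,\ell=1}^{\infty}\frac{1}{(k\ell)^\sigma}\left(\frac{\ell}{k}\right)^{it}\cdot\frac{1}{N}\sum_{n=1}^{N}\mathrm{e}\left(\frac{1}{2\pi}P_{\underline{a}}(n)\log\frac{\ell}{k}\right).
\end{align*}
The diagonal terms $k=\ell$ contribute exactly $\zeta(2\sigma)$. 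For the off-diagonal terms, the inner average is an average of $\mathrm{e}(P_{\underline{b}}(n))$ where $\underline{b}=\underline{b}(k,\ell)$ has entries $b_i=a_i\log(\ell/k)/(2\pi)$.

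For part \eqref{indep}: when $\underline{a}\in\mathcal{L}(d)$, the definition of $\mathcal{L}(d)$ guarantees that for every pair $k\neq\ell$ at least one coordinate $b_i=a_i\log(\ell/k)/(2\pi)$ is irrational, hence Lemma \ref{help} gives that the inner average tends to $0$ as $N\to\infty$. A dominated-convergence argument over the $(k,\ell)$-sum (the dominating function being $\sum_{k,\ell}(k\ell)^{-\sigma}<\infty$, with each inner average bounded by $1$ uniformly in $N$) then kills all off-diagonal contributions in the limit, leaving $\zeta(2\sigma)$.

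For part \eqref{dep}: with $\underline{a}$ as in \eqref{reson}, for a given pair $(k,\ell)$ we have $b_i=m_i\log(\ell/k)/\log(k_0/\ell_0)$ for all $i$. If $(k,\ell)\in U(k_0,\ell_0)$, say $k/\ell=(k_0/\ell_0)^u$ with $u\in\mathbb{Z}$, then $\log(\ell/k)/\log(k_0/\ell_0)=-u\in\mathbb{Z}$, so every $b_i$ is an integer and $\mathrm{e}(P_{\underline{b}}(n))=1$ identically; the inner average is $1$. Conversely, if $(k,\ell)\notin U(k_0,\ell_0)$ I must show at least one $b_i$ is irrational so that Lemma \ref{help} applies and the inner average vanishes in the limit — this uses the hypothesis that $(k_0/\ell_0)^{1/q}$ is irrational for every integer $q\ge 2$, together with $(m_1,\dots,m_d)\neq\underline{0}$: pick $j$ with $m_j\neq 0$; if $b_j=m_j\log(\ell/k)/\log(k_0/\ell_0)$ were rational, say $=p/q$ in lowest terms, then $(\ell/k)^{q}=(k_0/\ell_0)^{p/m_j\cdot q/?}$ — more carefully, $\log(\ell/k)=(p/(qm_j))\log(k_0/\ell_0)$ forces $(\ell/k)^{qm_j}=(k_0/\ell_0)^{p}$, and unique factorization of the rationals $\ell/k$ and $k_0/\ell_0$ (with $k_0,\ell_0$ coprime) together with the irrationality hypothesis forces $qm_j\mid p$ up to the structure of $k_0/\ell_0$ being a non-perfect-power ratio, ultimately yielding $k/\ell=(k_0/\ell_0)^{u}$ for some $u\in\mathbb{Z}$, i.e. $(k,\ell)\in U(k_0,\ell_0)$, a contradiction. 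Granting this dichotomy, dominated convergence again reduces the off-diagonal sum to $\sum_{(k,\ell)\in U(k_0,\ell_0),\,k\neq\ell}(k\ell)^{-\sigma}(\ell/k)^{it}$, and pairing $(k,\ell)$ with $(\ell,k)$ turns this into $2\sum_{(k,\ell)\in U(k_0,\ell_0),\,k<\ell}\cos(t\log(k/\ell))/(k\ell)^\sigma$, which is \eqref{dep}.

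The main obstacle is the converse direction in part \eqref{dep}: proving that $b_j\in\mathbb{Q}$ forces $(k,\ell)\in U(k_0,\ell_0)$. This is a statement purely about multiplicative independence of rationals and requires carefully using that $k_0/\ell_0$ is not a rational perfect $q$-th power for any $q\ge 2$ (equivalently $(k_0/\ell_0)^{1/q}\notin\mathbb{Q}$), so that the subgroup of $\mathbb{Q}_{>0}$ generated by $k_0/\ell_0$ is exactly $\{(k_0/\ell_0)^u:u\in\mathbb{Z}\}$ with no "fractional powers" landing back in $\mathbb{Q}$. The coprimality of $k_0$ and $\ell_0$ and unique factorization in $\mathbb{Q}_{>0}$ make this a clean if slightly fiddly verification; everything else (the term-by-term averaging, the application of Lemma \ref{help}, and the dominated-convergence interchange) is routine given $\sigma>1$.
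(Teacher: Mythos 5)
Your proposal is correct and follows essentially the same route as the paper: expand $|\zeta(s+iP_{\underline{a}}(n))|^2$ into the absolutely convergent double Dirichlet series, interchange the limit with the $(k,\ell)$-sum, separate diagonal from off-diagonal terms, and apply Lemma \ref{help} to the off-diagonal averages, with the $U(k_0,\ell_0)$ dichotomy handled exactly as in the paper via the irrationality of $(k_0/\ell_0)^{p/(qm_i)}$ when $qm_i\nmid p$. The "fiddly" multiplicative-independence verification you flag is treated just as briefly in the paper's own proof, so no further elaboration is needed.
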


\begin{proof}
Let $\underline{a}\in\mathcal{L}(d)$. 
Then
\begin{align*}
\lim\limits_{N\to\infty}\dfrac{1}{N}\sum\limits_{n=1}^N|\zeta\left(s+iP_{\underline{a}}(n)\right)|^2
&=\lim\limits_{N\to\infty}\dfrac{1}{N}\sum\limits_{n=1}^N\sum\limits_{k,\ell=1}^{\infty}\dfrac{1}{(k\ell)^\sigma}\left(\dfrac{k}{\ell}\right)^{i(t+P_{\underline{a}}(n))}\\
&=\sum\limits_{k,\ell=1}^{\infty}\dfrac{1}{(k\ell)^\sigma}\left(\dfrac{k}{\ell}\right)^{it}\lim\limits_{N\to\infty}\dfrac{1}{N}\sum\limits_{n=1}^N\left(\dfrac{k}{\ell}\right)^{iP_{\underline{a}}(n)}\\
&=:A,
\end{align*}
where interchanging summation and the limit operator is valid by the absolute convergence of the double series $\sum_{k,\ell}\left(k\ell\right)^{-\sigma}$.
 If we split now the latter double sum in sums of diagonal and non-diagonal terms, we get 
 \begin{align}\label{A1}
A=\zeta(2\sigma)+\mathop{\sum\limits_{k,\ell=1}^{\infty}}_{k\neq\ell}\dfrac{1}{(k\ell)^\sigma}\left(\dfrac{k}{\ell}\right)^{it}\lim\limits_{N\to\infty}\dfrac{1}{N}\sum\limits_{n=1}^N\mathrm{e}\left(\dfrac{P_{\underline{a}}(n)}{2\pi}\log\dfrac{k}{\ell}\right).
\end{align}
Observe that for any pair of integers $(k,\ell)$ with $k\neq\ell$, $\log\left(k/\ell)\right)P_{\underline{a}}(x)/2\pi$ is a polynomial with at least one irrational coefficient as can be seen from our choice of the vector $\underline{a}$. 
Therefore, Lemma \ref{help} yields that
\begin{align*}
\lim\limits_{N\to\infty}\dfrac{1}{N}\sum\limits_{n=1}^N\mathrm{e}\left(\dfrac{P_{\underline{a}}(n)}{2\pi}\log\dfrac{k}{\ell}\right)=0
\end{align*}
for any pair $(k,\ell)$ with $k\neq\ell$, and relation \eqref{indep} follows. 

Let now $\underline{a}$ be as in \eqref{reson}.
The proof of \eqref{dep} follows in the same lines as above until we reach relation \eqref{A1}.
In this case we split the sum of the non-diagonal terms into a sum $\sum_1$ over pairs $(k,\ell)\in U(k_0,\ell_0)$ with $k\neq\ell$ and a sum $\sum_2$ over pairs $(k,\ell)\in\mathbb{N}^2\setminus U(k_0,\ell_0)$:
\begin{align}\label{A_1}
\begin{split}
A=&\,\zeta(2\sigma)+\left\{\sum\nolimits_{1}+\sum\nolimits_2\right\}\dfrac{1}{(k\ell)^\sigma}\left(\dfrac{k}{\ell}\right)^{it}\lim\limits_{N\to\infty}\dfrac{1}{N}\sum\limits_{n=1}^N\mathrm{e}\left(\dfrac{P_{\underline{a}}(n)}{2\pi}\log\dfrac{k}{\ell}\right).
\end{split}
\end{align}
Observe that by our definition of $\underline{a}$ we have for every $(k,\ell)\in U(k_0,\ell_0)$ and any $n\in\mathbb{N}$ that
\begin{align}\label{1}
\mathrm{e}\left(\dfrac{P_{\underline{a}}(n)}{2\pi}\log\dfrac{k}{\ell}\right)=\mathrm{e}\left(u(k,\ell)\left(m_1n+\dots m_d n^d\right)\right)=1.
\end{align}
On the other hand, if $(k,\ell)\in\mathbb{N}^2\setminus U(k_0,\ell_0)$, then the polynomial $P_{\underline{a}}(x)\log(k/\ell)/2\pi$ has at least one irrational coefficient.
Assuming the contrary, we would have that for any $m_i\neq0$, there are coprime integers $p=p(k,\ell,m_i)$ and $q=q(k,\ell,m_i)$ such that
\begin{align*}
m_i\left(\log\dfrac{k_0}{\ell_0}\right)^{-1}\log\dfrac{k}{\ell}=\dfrac{p}{q}\,\,\,\text{ or }\,\,\,
\dfrac{k}{\ell}=\left(\dfrac{k_0}{\ell_0}\right)^{p/(qm_i)}
\end{align*}
which is impossible since the number $(k_0/\ell_0)^{p/(qm_i)}$ is irrational. 
Indeed, if $qm_i\mid p$ then $(k,\ell)\in U(k_0,\ell_0)$ and that contradicts our initial assumption on $(k,\ell)$.
Thus, $qm_i\nmid p$ and our claim follows from our assumptions that $(k_0,\ell_0)=1$, $k_0\neq\ell_0$ and that $(k_0/\ell_0)^{1/qm_i}$ is irrational.
Therefore, Lemma \ref{help} yields that
\begin{align}\label{Weyl1}
\lim\limits_{N\to\infty}\dfrac{1}{N}\sum\limits_{n=1}^N\mathrm{e}\left(\dfrac{P_{\underline{a}}(n)}{2\pi}\log\dfrac{k}{\ell}\right)=0
\end{align}
for any $(k,\ell)\in\mathbb{N}^2\setminus U(k_0,\ell_0)$.
Now relation \eqref{dep} follows from \eqref{A_1}-\eqref{Weyl1}.
\end{proof}
It should be mentioned here that Good \cite{good1978diskrete} gives a closed expression for the sum
\begin{align*}
\mathop{\sum\limits_{(k,\ell)\in U(k_0,\ell_0)}}_{k<\ell}\dfrac{\cos\left(t\log\frac{k}{\ell}\right)}{(k\ell)^\sigma}.
\end{align*}
It is too technical to be written down here, but it can be seen that it is equal to $\zeta(2\sigma)$ times a factor which depends on $t$ and the prime factorization of $k_0$ and $\ell_0$.
We could not treat the remaining, countably many cases of $\underline{a}\notin\mathcal{L}(d)$ for $d\geq2$, because we end up estimating Weyl sums $
\sum_{n}\mathrm{e}\left(a_1n+\dots+a_dn^d\right)
$ where the polynomials have only rational coefficients and at least one of them is not an integer.
Actually, if $a_2,\dots,a_d\in\mathbb{Z}$ and $a_1\in\mathbb{Q}\setminus\mathbb{Z}$, then
\begin{align*}
\sum_{n=1}^N\mathrm{e}\left(a_1n+\dots+a_dn^d\right)=\sum_{n=1}^N\mathrm{e}\left(a_1n\right)=O(1),\,\,\,N\geq 1,
\end{align*}
and we would obtain relation \eqref{indep}, as Reich \cite{ReichE} and Good \cite{good1978diskrete} did for the case $d=1$.
If, however, $a_i\in\mathbb{Q}\setminus\mathbb{Z}$ for some $i\in\lbrace2,\dots,d\rbrace$, then the known estimates for the corresponding Weyl sums are not sufficiently good to show that at least 
\begin{align*}
\sum_{n=1}^N\mathrm{e}\left(a_1n+\dots+a_dn^d\right)=o(N),\,\,\,N\geq 1.
\end{align*}
The analogous theorem for monomials can be obtained similarly:
\begin{theorem}
Let $d\geq1$ be an integer and $\sigma>1$.
If $a\in\mathcal{L}(1)$, then
\begin{align*}
\lim\limits_{N\to\infty}\dfrac{1}{N}\sum\limits_{n=1}^N\left|\zeta\left(s+ian^d\right)\right|^2=\zeta(2\sigma).
\end{align*}
Let also $m\in\mathbb{Z}\setminus\left\{{0}\right\}$ and $k_0\neq\ell_0$ be coprime positive integers such that $(k_0/\ell_0)^{1/q}$ is irrational for every $q\in\mathbb{N}\setminus\lbrace1\rbrace$.
If 
\begin{align*}
{a}:=2\pi m\left(\log\frac{k_0}{\ell_0}\right)^{-1},
\end{align*}
then
\begin{align*}
\lim\limits_{N\to\infty}\dfrac{1}{N}\sum\limits_{n=1}^N\left|\zeta\left(s+ian^d\right)\right|^2=\zeta(2\sigma)+2\mathop{\sum\limits_{(k,\ell)\in U(k_0,\ell_0)}}_{k<\ell}\dfrac{\cos\left(t\log\frac{k}{\ell}\right)}{(k\ell)^\sigma}.
\end{align*}
\end{theorem}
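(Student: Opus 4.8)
The plan is to follow the proof of Theorem~\ref{s>1} almost verbatim, replacing the polynomial $P_{\underline{a}}(x)$ by the monomial $ax^d$, whose only nonzero coefficient is $a$. First I would expand $|\zeta(s+ian^d)|^2$ through the Dirichlet series of $\zeta$, which converges absolutely since $\sigma>1$, to get
\begin{align*}
\frac{1}{N}\sum_{n=1}^N\left|\zeta\left(s+ian^d\right)\right|^2=\sum_{k,\ell=1}^{\infty}\frac{1}{(k\ell)^\sigma}\left(\frac{k}{\ell}\right)^{it}\cdot\frac{1}{N}\sum_{n=1}^N\left(\frac{k}{\ell}\right)^{ian^d},
\end{align*}
and then pass to the limit $N\to\infty$, interchanging it with the double sum by the absolute convergence of $\sum_{k,\ell}(k\ell)^{-\sigma}$. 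The diagonal terms $k=\ell$ contribute exactly $\zeta(2\sigma)$, so everything reduces to understanding the off-diagonal limits $\lim_{N\to\infty}\frac{1}{N}\sum_{n=1}^N\mathrm{e}\!\left(\frac{an^d}{2\pi}\log\frac{k}{\ell}\right)$ with $k\neq\ell$.

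For the first assertion, $a\in\mathcal{L}(1)$ means precisely that $\frac{a}{2\pi}\log\frac{k}{\ell}$ is irrational for every pair of positive integers $k\neq\ell$; hence $\frac{a\log(k/\ell)}{2\pi}x^d$ is a monomial, and in particular a polynomial, with an irrational coefficient, and Lemma~\ref{help} forces the corresponding limit to be $0$. Thus only the diagonal survives and the sum equals $\zeta(2\sigma)$.

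For the second assertion, with $a=2\pi m\left(\log\frac{k_0}{\ell_0}\right)^{-1}$, I would split the off-diagonal sum into a part over $(k,\ell)\in U(k_0,\ell_0)$ with $k\neq\ell$ and a part over $\mathbb{N}^2\setminus U(k_0,\ell_0)$, exactly as in the proof of Theorem~\ref{s>1}. If $(k,\ell)\in U(k_0,\ell_0)$, writing $k/\ell=(k_0/\ell_0)^{u}$ gives $\frac{a}{2\pi}\log\frac{k}{\ell}=mu\in\mathbb{Z}$, so $\mathrm{e}\!\left(\frac{an^d}{2\pi}\log\frac{k}{\ell}\right)=1$ for all $n$ and the limit is $1$. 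If $(k,\ell)\notin U(k_0,\ell_0)$, then $\frac{a}{2\pi}\log\frac{k}{\ell}$ is irrational: were it equal to $p/q$ in lowest terms, then $k/\ell=(k_0/\ell_0)^{p/(qm)}$, and since $(k,\ell)\notin U(k_0,\ell_0)$ we must have $qm\nmid p$, whence $(k_0/\ell_0)^{p/(qm)}$ is irrational by the coprimality of $k_0,\ell_0$, the fact that $k_0\neq\ell_0$, and the irrationality of $(k_0/\ell_0)^{1/(qm)}$ — contradicting $k/\ell\in\mathbb{Q}$. So Lemma~\ref{help} again makes these terms vanish.

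Collecting everything, the limit equals $\zeta(2\sigma)+\sum_{(k,\ell)\in U(k_0,\ell_0),\,k\neq\ell}(k\ell)^{-\sigma}(k/\ell)^{it}$, and since $U(k_0,\ell_0)$ is invariant under $(k,\ell)\mapsto(\ell,k)$, pairing each $(k,\ell)$ with its transpose converts this into $2\sum_{(k,\ell)\in U(k_0,\ell_0),\,k<\ell}(k\ell)^{-\sigma}\cos\!\left(t\log\frac{k}{\ell}\right)$, which is the claimed formula. The argument is entirely parallel to that of Theorem~\ref{s>1}; the only step demanding genuine attention is the irrationality check in the complementary case $\mathbb{N}^2\setminus U(k_0,\ell_0)$, and even there it is the same verification already carried out for Theorem~\ref{s>1}, so no new difficulty arises.
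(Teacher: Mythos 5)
Your proposal is correct and is exactly the argument the paper intends: the paper itself states this theorem with only the remark that it ``can be obtained similarly'' to Theorem~\ref{s>1}, and your adaptation — treating $\frac{a\log(k/\ell)}{2\pi}x^d$ as a polynomial with irrational leading coefficient so that Lemma~\ref{help} applies, and reusing the same irrationality check on $\mathbb{N}^2\setminus U(k_0,\ell_0)$ — is precisely that similar argument.
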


\section{Generalizations and Concluding Remarks}\label{genconc}
The main ingredients of this paper are an order result for $\zeta(s)$ (Lemma \ref{boundL}), bounds for mean values of Weyl sums (Theorem \ref{BDG} and Theorem \ref{Wol}) and basic properties of uniformly distributed sequences.

It is easy to see that all the results proven above will hold for any Dircihlet $L$-function with the same abscissas $\mathbf{S}(d)$ and $\mathbf{S}_{\mathrm{\mathbf{mo}}}(d)$.
The reason is that Lemma \ref{boundL} is in fact true for the Hurwitz zeta-function 
\begin{align*}
\zeta(s;\alpha):=\sum\limits_{n=0}^{\infty}\dfrac{1}{(n+\alpha)^s},\,\,\,\sigma>1,\alpha\in(0,1],
\end{align*}
and any Dirichlet $L$-function can be expressed as a finite linear combination of Hurwitz zeta-functions.
Therefore, we can use the same order result for such functions and the proofs remain the same.
As a matter of fact, the same can be said for the Hurwitz zeta-function of a given parameter $\alpha\in(0,1]$.
We will only need to alter slightly Proposition \ref{AFE}, Lemma \ref{FLM} and Lemma \ref{Wol}, where instead of $k/\ell$ the fractions $(k+\alpha)/(\ell+\alpha)$ appear.
Moreover, the results will also hold for any integral power $m$ of all the aforementioned functions.
In this case we will have to modify slightly $\mathbf{S}(d)$ and $\mathbf{S}_{\mathrm{\mathbf{mo}}}(d)$ by redefining the quantity $\mathbf{A}(\mu)$ in Proposition \ref{AFE} since in this case it will depend on $m$ as well. 
Again Lemma \ref{sapprox}, Lemma \ref{apprx} and the Lindel\"of hypothesis will yield the required results.

It is clear that any improvements on the number $\eta$ which appears in Lemma \ref{apprx}, that is, any improvements regarding the order of growth of $\zeta(s)$ inside the strip $1/2\leq\sigma\leq1$, would instantly improve the range of the vertical strips on which we can obtain discrete moments with respect to polynomials (unconditionally).
However, this is one of the most difficult parts on analytic number theory, it is strongly connected to zero-free regions of $\zeta(s)$ and, except of some distinguishable results due to Richert \cite{zbMATH03257526}, Kulas \cite{zbMATH01333807} and Ford \cite{ford2002vinogradov}, little has been established so far.

\bibliography{discrete_3}{}

\begin{thebibliography}{10}

\bibitem{apostol2013introduction}
{ T.~M. Apostol}, {\em {Introduction to Analytic Number Theory}}, Springer
  Science \& Business Media, 2013.

\bibitem{zbMATH03110423}
{ A.~S. {Besicovitch}}, {\em {Almost periodic functions}}, Neudruck. New York:
  Dover Publications, Inc. XIII, 180 p. (1955), 1955.

\bibitem{zbMATH06662221}
{ J.~{Bourgain}, C.~{Demeter}, L.~{Guth}}, {\em \textup{Proof of the main
  conjecture in Vinogradov's mean value theorem for degrees higher than
  three}}, {\it Ann. Math. (2)}, {\bf 184} (2016), 633--682.

\bibitem{ford2002vinogradov}
{ K.~Ford}, {\em \textup{Vinogradov's integral and bounds for the Riemann zeta
  function}}, {\it Proceedings of the London Mathematical Society}, {\bf 85}
  (2002), 565--633.

\bibitem{good1978diskrete}
{ A.~Good}, {\em \textup{Diskrete Mittel f{\"u}r einige Zetafunktionen}}, {\it
  Journal f{\"u}r die reine und angewandte Mathematik}, {\bf 303} (1978),
  51--73.

\bibitem{zbMATH02569960}
{ G.~H. {Hardy}, J.~E. {Littlewood}}, {\em \textup{The approximate functional
  equations for \(\zeta(s)\) and \(\zeta^2(s)\)}}, {\it Proc. Lond. Math. Soc.
  (2)}, {\bf 29} (1929), 81--97.

\bibitem{zbMATH02515008}
{ L.-K. {Hua}}, {\em \textup{On Waring's problem}}, {\it Q. J. Math., Oxf.
  Ser.}, {\bf 9} (1938), 199--202.

\bibitem{ivicriemann}
{ A.~Ivic}, {\em {The Riemann Zeta-Function: Theory and Applications}}, Dover
  Publications, 2003.

\bibitem{iwaniec2004analytic}
{ H.~Iwaniec, E.~Kowalski}, {\em {Analytic Number Theory}}, vol.~53, American
  Mathematical Soc., 2004.

\bibitem{kuipers2012uniform}
{ L.~Kuipers, H.~Niederreiter}, {\em {Uniform distribution of sequences}},
  Dover Publications, 2006.

\bibitem{zbMATH01333807}
{ M.~{Kulas}}, {\em \textup{Refinement of an estimate for the Hurwitz zeta
  function in a neighbourhood of the line \(\sigma=1\)}}, {\it Acta Arith.},
  {\bf 89} (1999), 301--309.

\bibitem{zbMATH06404852}
{ X.~{Li}, M.~{Radziwi{\l}{\l}}}, {\em \textup{The Riemann zeta function on
  vertical arithmetic progressions}}, {\it Int. Math. Res. Not.}, {\bf 2015}
  (2015), 325--354.

\bibitem{zbMATH03087040}
{ C.~R. {Putnam}}, {\em \textup{On the non-periodicity of the zeros of the
  Riemann zeta-function}}, {\it Am. J. Math.}, {\bf 76} (1954), 97--99.

\bibitem{ReichE}
{ A.~Reich}, {\em \textup{Eulerprodukte und diskrete Mittelwerte}}, {\it
  Monatsh. Math.}, {\bf 90} (1980), 303--309.

\bibitem{zbMATH03257526}
{ H.-E. {Richert}}, {\em \textup{Zur Absch\"atzung der Riemannschen
  Zetafunktion in der N\"ahe der Vertikalen \(\sigma = 1\)}}, {\it Math. Ann.},
  1{\bf 69} (1967), 97--101.

\bibitem{zbMATH05797868}
{ P.~{Salberger}, T.~D. {Wooley}}, {\em \textup{Rational points on complete
  intersections of higher degree, and mean values of Weyl sums}}, {\it J. Lond.
  Math. Soc., II. Ser.}, {\bf 82} (2010), 317--342.

\bibitem{zbMATH05013256}
{ J.~{S\'andor}, D.S. {Mitrinovi\'c}, B.~{Crstici}}, {\em {Handbook of number
  theory. I. 2nd printing}}, Dordrecht: Springer, 2nd printing~ed., 2006.

\bibitem{titchmarsh1951theory}
{ E.C. Titchmarsh}, {\em {The Theory of the Riemann Zeta-function}}, Oxford
  University Press, 1951.

\bibitem{zbMATH02244608}
{ M.~{van Frankenhuijsen}}, {\em \textup{Arithmetic progressions of zeros of
  the Riemann zeta function}}, {\it J. Number Theory}, {\bf 115} (2005),
  360--370.

\bibitem{zbMATH06567880}
{ T.~D. Wooley}, {\em \textup{The cubic case of the main conjecture in
  Vinogradov's mean value theorem}}, {\it Adv. Math.}, {\bf 294} (2016),
  532--561.

\end{thebibliography}
\bibliographystyle{siam-new}
\end{document}